\documentclass[12pt,a4paper]{amsart}
\usepackage[latin1]{inputenc}
\usepackage{amsxtra,amssymb,amsthm,amsmath,amscd,mathrsfs}
\usepackage[all]{xy}

\usepackage[T1]{fontenc}
\usepackage[french]{babel}

\paperheight=258mm
\paperwidth=252mm
\textheight=230mm  \topmargin=-5mm
\textwidth=150mm 
\oddsidemargin=5mm 
\evensidemargin=5mm
\rightmargin=40mm

\newcommand*{\C}{\mathbb{C}}

\newcommand*{\HH}{\mathbb{H}}

\newcommand*{\N}{\mathbb{N}}

\newcommand*{\Q}{\mathbb{Q}}
\newcommand*{\R}{\mathbb{R}}

\newcommand*{\Z}{\mathbb{Z}}
\def\le{\leqslant}
\def\leq{\leqslant}
\def\ge{\geqslant}
\def\geq{\geqslant}

\newcommand*{\sumh}{\mathop{\sideset{}{^\mathrm{h}}\sum}}
\newcommand*{\suma}{\mathop{\sideset{}{^\mathrm{*}}\sum}}

\newcommand*{\dd}{\,\mathrm{d}}
\newcommand*{\re}{\Re e\,}
\newcommand*{\im}{\Im m\,}
\theoremstyle{plain}
\newtheorem{thm}{Th\'eor\`eme}
\newtheorem{lemme}[thm]{Lemme}
\newtheorem{corollaire}[thm]{Corollaire}
\newtheorem{proposition}[thm]{Proposition}

\theoremstyle{definition}

\begin{document}

\title[Non annulation des fonctions $L$ automorphes au point central]
{Non annulation des fonctions $L$ automorphes au point central}

\author{D. Rouymi}
\address{Institut Elie Cartan Nancy
\\
CNRS, Nancy-Universit\'e, INRIA
\\
Boulevard des Aiguillettes, B.P. 239
\\
54506 Van\-d\oe uvre-l\`es-Nancy
\\
France}
\email{rouymi@yahoo.fr}

\date{\today}

\begin{abstract}
Les travaux sur les formes modulaires sont nombreux et divers; 
concernant leurs annulations Michel, Kowalski \& Vanderkam montrent (en autre) 
qu'il existe une proportion positive des formes qui ne s'annulent pas au point critique. 
Ce r\'esultat fut montr\'e par ces derniers pour des formes de niveau premier; 
d'autre part Iwaniec, Luo \& Sarnak montrent que ceci se g\'en\'eralise aux formes dont le niveau est sans facteur carr\'e.
Dans le but de comprendre l'influence de l'arithm\'etique du niveau sur les z\'eros de ces formes, 
cet article pr\'esente une \'etude de la g\'en\'eralisation aux formes primitives 
dont le niveau est la puissance d'un nombre premier.
\end{abstract}
\maketitle

\tableofcontents

\section{Introduction}

L'objet de cet article est d'\'etudier la non-annulation des fonctions $L$ de formes primitives 
de poids $k$ et de niveau ${\mathfrak{p}}^\nu$, 
o\`u $\mathfrak{p}$ est un nombre premier fix\'e et $\nu \to \infty$.

Nous commen\c cons par un rappel rapide de quelques notions.
On appelle forme parabolique  de poids $ k\geq{2} $ pair et de niveau $q$, toute fonction $f$ holomorphe sur le demi plan de Poincar\'e $\HH:=\{z\in \C :  \im {z} >0\}$
telle que 
\begin{equation}\label{relationmodulaire}
f\left(\frac{az+b}{cz+d}\right)=(cz+d)^kf(z) 
\end{equation}
pour tout \'el\'ement  
$$\begin{pmatrix}
a & b
\\
c & d
\end{pmatrix}
\in 
\Gamma_0(q) := \left\{
\begin{pmatrix}
a & b
\\
c & d
\end{pmatrix}
\in SL_2(\Z) : q\mid c\right\}.
$$
et que la fonction $z\mapsto (\im z)^{k/2}f(z)$ est born\'ee sur $\HH$.
On d\'esigne par $S_{k}(q)$ l'espace des formes paraboliques de poids $k$ et de niveau $q$, 
que l'on munit du produit scalaire  
$$
\langle f, g\rangle_q
: = \int_{F} f(z)\overline{g}(z) y^ k\frac{{\rm d}x {\rm d}y}{y^2},
$$
o\`u $F$ d\'esigne un domaine fondamental par l'action homographique de $\Gamma_0(q)$  
sur $\Q\cup{\left\{\infty\right\}}$. 
Pour chaque $f\in S_{k}(m) $ avec $m\mid q$, $m<q $ et $ l\mid (q/m)$ alors
$ z\mapsto f(lz)$ est une forme parabolique de $\Gamma_0(q)$.
De telles formes s'appellent des formes anciennes de niveau $q$.  
L'orthogonal de l'espace engendr\'e par ces formes est l'espace des formes nouvelles,
not\'ee par $S_{k}^{*}(q)$.
D\'esignons par ${\rm H}_{k}^{*}(q)$ la base orthogonale de $S_{k}^{*}(q)$ consitu\'ee des formes primitives. Ses \'el\'ements sont des fonctions propres des op\'erateurs de Hecke (cf. \cite[Paragraphes 2.7 et 3.3]{Iw 97}).

Toute forme $f\in S_k(q)$ a un d\'evelopeent de Fourier en $\infty$ : 
\begin{equation}\label{1}
f(z)=\sum_{n\geq1} a_f(n) e(nz),
\end{equation}
o\`u $e(t):=e^{2\pi it}$.
On pose
\begin{equation}\label{2}
\lambda_f(n) := a_f(n) n^{-(k-1)/2}.
\end{equation}
Quand $f\in {\rm H}_k^*(q)$, on a
\begin{align}
\lambda_f(1)
& =1, 
\label{5a}
\\\noalign{\vskip 1mm}
\lambda_f(n)
& \in \R,
\label{5b}
\\\noalign{\vskip -2mm}
\lambda_f(m)\lambda_f(n)
& = \sum_{\substack{d\mid (m, n)\\ (d, q)=1}} \lambda_f\bigg(\frac{mn}{d^2}\bigg)
\label{5}
\end{align}
pour tous les entiers $m$ et $n$ $\geq 1$.
En particulier, on utilisera dans la suite, que si $f\in {\rm H}_{k}^{*}(m')$ avec $m'$ entier $\geq 2$ tel que $m'\mid q$ et si on a $r$ et $r'$ entiers $\geq 1$ tels que
$r\mid q^\infty$ ou $r'\mid q^\infty$ alors :
\begin{equation}\label{6}
{\lambda_{f}(rr')=\lambda_f(r)\lambda_{f}(r')}
\end {equation}
de plus les travaux de Deligne montrent que si $f\in {\rm H}_{k}^{*}(m')$ alors
\begin{equation}\label{7}
\lambda_f(p)^2
= \begin{cases}
0    & \text{si $p^2\,|\,m'$},
\\
1/p & \text{si $p\;\|\,m'$}.
\end{cases}
\end{equation}

La fonction $L$ automorphe associ\'ee \`a $f\in {\rm H}_k^*(q)$ est d\'efinie par
\begin{equation}\label{3}
L(s, f):=\sum_{n\geq 1} \lambda_f(n) n^{-s}
\qquad(\re s>1). 
\end{equation}
D\'efinissons la fonction $L$ compl\`ete
\begin{equation}\label{69}
\Lambda(s, f) := \hat{q}^{s} \Gamma\bigg(s+{\frac{k-1}{2}}\bigg) L(s, f),
\end{equation}
o\`u
\begin{equation}\label{defhatq}
\hat{q}:=\sqrt{q}/(2\pi).
\end{equation}
Alors cette fonction peut \^etre prolong\'ee analytiquement sur $\C$ et v\'erifie l'\'equation fonctionnelle :
\begin{equation}\label{EF}
{\Lambda(s,f)=\varepsilon_f\Lambda(1-s,f)}
\qquad
(s\in \C)
\end{equation}
o\`u \ $\varepsilon_f=1$ ou $-1$.

Les valeurs sp\'eciales de $L(s, f)$ (par exemple, valeurs centrales et valeurs au bord de la bande critique)
contiennent des informations int\'eressantes.
En particulier,
la non annulation de $L(s, f)$ au point central $s=\frac{1}{2}$ est 
une des questions centrales en th\'eorie des fonctions $L$ automorphes
et a beaucoup d'applications dans divers probl\`emes.
D'apr\`es Gross et Zagier \cite{GZ86}, nous savons que
\begin{equation}\label{positivity}
L(\textstyle\frac{1}{2}, f)\geq 0.
\end{equation}
Un lien surprenant avec le z\'ero de Landau-Siegel a \'et\'e decouvert par Iwaniec \& Sarnak \cite{IS00}.
En d\'esignant par $\varphi(q)$ la fonction d'Euler et
${\rm H}^{+}_{k}(q)$ (resp. ${\rm H}^{-}_{k}(q)$) est 
l'ensemble de $f\in {\rm H}^{*}_{k}(q)$ avec $\varepsilon_f=1$ (resp. $\varepsilon_f=-1$),
leur r\'esultat s'\'enonce comme suit : si $q$ est sans facteur carr\'e assez grand 
tel que $\varphi(q)\gg q$ alors
$$
\frac{1}{\left|{\rm H}^{+}_{k}(q)\right|}
\sum_{\substack{f\in {\rm H}^{+}_{k}(q)\\ L(\frac{1}{2}, f)\geq (\log q)^{-2}}}1
\geq \frac{1}{2}$$
et si l'on peut remplacer $\frac{1}{2}$ par une constante $c>\frac{1}{2}$, alors il n'existe pas le z\'ero de Landau-Siegel pour les fonctions $L$ de Dirichlet. 

Le premier r\'esultat concernant la non annulation de $L(\frac{1}{2}, f)$ a \'et\'e obtenu par Duke \cite{D95}.
Il a d\'emontr\'e que si $q$ est un nombre premier avec $q\geq 11$ et $q\neq 13$ alors il existe une constante absolue $C>0$ telle que:
\begin{equation}\label{Duke}
\frac{1}{\left|{\rm H}^{*}_2(q)\right|}
\sum_{\substack{f\in {\rm H}^{*}_2(q)\\L(\frac{1}{2}, f)\neq 0}}1
\geq \frac{C}{(\log q)^2}\cdot
\end{equation}
Cette minoration est obtenue avec la formule de Petersson \cite[Lemma 1, p.167]{D95}.
Par la suite, Kowalski \& Michel \cite{KM00} obtiennent une proportion positive de non-annulation, 
\`a savoir: si $q$ est un nombre premier assez grand alors
\begin{equation}\label{KowalskiMichel}
\frac{1}{\left|{\rm H}^{*}_{2}(q)\right|}
\sum_{\substack{f\in {\rm H}^{*}_{2}(q)\\ L(\frac{1}{2}, f)\neq 0}}1\geq \frac{19}{54}\cdot
\end{equation}
Ce r\'esultat est d\'emontr\'e avec la formule de trace de Petersson et le calcul des moments $1$ et $2$ des fonctions $L$ mollifi\'e
\footnote{C'est cette technique de mollification 
qui permet de supprimer le facteur $\log$ dans le r\'esultat de Duke}.
En m\^eme temps (ind\'ependemment), 
Vanderkam \cite{V99} applique la formule de trace de Selberg \cite[Propopsition 4]{Ser} aux deux premiers moments de la fonction $L$ pour obtenir (\ref{KowalskiMichel}) 
avec une constante l\'eg\`erement moins bonne
$\frac{1}{48}$ \`a la place de $\frac{19}{54}$.
Notons que ce r\'esultat est obtenu \'egalement (sous une forme diff\'erente) 
par Kowalski, Michel \& Vanderkam \cite{KMV00}.
Enfin quand $q$ est sans facteur carr\'e, 
Iwaniec, Luo \& Sarnak \cite{ILS} montrent 
$$\liminf_{q\to \infty}
\frac{1}{\left|{\rm H}^{+}_{k}(q)\right|}
\sum_{\substack{f\in {\rm H}^{+}_{k}(q)\\ L(\frac{1}{2}, f)\neq 0}}1
\geq \frac{9}{16}\cdot$$
Dans le m\^eme article ils \'etablissent une fomule de trace sp\'ecifique au cas 
o\`u $q$ est sans facteur carr\'e.

D'autre part,
l'\'etude sur les valeurs extr\^emes de $L(1, {\rm sym}^mf)$
(fonction $L$ de la $m$-\`eme puissance sym\'etrique associ\'ee \`a $f$)
a re\c cu beaucoup d'attention (voir \cite{CM04}, \cite{RW05} et \cite{LW08}).
En particulier, les r\'esultats de Royer \& Wu \cite{RW05} montrent que
les valeurs extr\^emes de $L(1, {\rm sym}^mf)$ d\'ependent, d'une mani\`ere suprenante,
des propri\'et\'es arithm\'etiques du niveau.
Donc il est naturel d'\'etudier l'influence de l'arithm\'etique du niveau 
sur le probl\`eme de non annulation de $L(\frac{1}{2}, f)$.
Dans cet article, nous proposons de minorer le quotient
$$
\frac{1}{\left|{\rm H}^{*}_{k}(q)\right|}
\sum_{\substack{f\in {\rm H}^*_k(q)\\ L(\frac{1}{2}, f)\neq 0}}1
$$
pour des entiers $q$ de la forme $p^{\nu}$, o\`u $p$ est un nombre premier et $\nu\ge 1$ est un entier.
Le choix de cette forme de niveau a deux raisons :
premi\`erement en prenant $\nu=1$, nous retrouvons le cas classique qui a \'et\'e \'etudi\'e
par Duke \cite{D95}, Kowalski \& Michel \cite{KM00} et Vanderkam \cite{V99}, mentionn\'e ci-dessus ; deuxi\`emement, en fixant $p$ et faisant $\nu\to\infty$,
on obtient un cas de niveau vraiment friable (i.e. il n'y a que les facteurs premiers petits).
Ce cas extr\^eme arithm\'etiquement contraire au cas de niveau premier 
nous aidera \`a comprendre l'influence de l'arithm\'etique du niveau 
sur le probl\`eme de non annulation.

On notera 
\begin{equation}\label{4}
\omega_q(f) := \frac{\Gamma(k-1)}{(4\pi)^{k-1}} \left\langle f, f\right\rangle_q.
\end{equation}
Pour une partie $A$ de $S_{k}(q)$ on d\'efinit la somme  harmonique :
$$
\sumh_{f\in A}\alpha_f := \sum_{f\in A} \alpha_f \omega_q(f).
$$

Dans cet article, nous montrerons le r\'esultat suivant.

\begin{thm}\label{thm_1}
Soient $k\ge 2$ un entier pair et $\mathfrak{p}$ un nombre premier.
Alors il existe une constante $\nu_0(k, \mathfrak{p})$ telle que
pour $\nu\ge \nu_0(k, \mathfrak{p})$ et $q=\mathfrak{p}^\nu$ on a
$$
\sumh_{\substack{f\in{{\rm H}^{*}_{k}(q)}\\ L(\frac{1}{2}, f)\neq 0}} 1
\gg \frac{1}{(\log q)^3},
$$
o\`u la constante impliqu\'ee ne d\'epend que de $k$ et $\mathfrak{p}$.
\end{thm}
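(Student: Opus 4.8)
The plan is to adapt the mollification method of Kowalski–Michel and Vanderkam to the prime-power level $q=\mathfrak p^\nu$. The strategy is to compute the first harmonic moment
$$
\mathscr M_1 := \sumh_{f\in{\rm H}^*_k(q)} L(\tfrac12,f)\,M(f)
$$
and the second harmonic moment
$$
\mathscr M_2 := \sumh_{f\in{\rm H}^*_k(q)} L(\tfrac12,f)^2\,M(f)^2,
$$
where $M(f)=\sum_{m\le M}x_m\lambda_f(m)$ is a mollifier of length $M=q^{\Delta}$ for some small $\Delta>0$, with well-chosen coefficients $x_m$ supported on integers coprime to $\mathfrak p$ (so that the Hecke relations \eqref{6} and \eqref{7} behave cleanly). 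By the Cauchy–Schwarz inequality,
$$
\sumh_{\substack{f\in{\rm H}^*_k(q)\\ L(\frac12,f)\neq 0}} 1
\;\ge\; \frac{\mathscr M_1^2}{\mathscr M_2},
$$
using positivity \eqref{positivity} of $L(\tfrac12,f)$ to discard the vanishing forms. The aim is then to show $\mathscr M_1\gg 1$ while $\mathscr M_2\ll (\log q)^3$, which yields the claimed lower bound $\gg(\log q)^{-3}$.

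First I would express $L(\tfrac12,f)$ and $L(\tfrac12,f)^2$ by their approximate functional equations, obtained from \eqref{69}--\eqref{EF}; since $\varepsilon_f$ is not constant over ${\rm H}^*_k(q)$ one must either sum it out or insert the harmonic weights in a way that the $\varepsilon_f$-dependent terms are negligible (the analytic conductor $\hat q^2=q/(2\pi)^2$ controls the effective length of the Dirichlet series, roughly $q$ for the first moment and $q$ for each factor in the second). Next I would open the $\lambda_f$-coefficients and apply the Petersson trace formula in the form valid for $q=\mathfrak p^\nu$ — this is the technical heart: for squarefull level the usual Petersson formula over the full space $S_k(q)$ must be sieved down to the \emph{new} subspace ${\rm H}^*_k(q)$ via Möbius inversion over the divisors of $\mathfrak p^\nu$, producing a main (diagonal) term plus a Kloosterman-sum error term with moduli divisible by $\mathfrak p^j$. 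The diagonal contribution, after inserting the mollifier, reduces to a sum over $m,n$ with $mn$ a perfect square times a controllable factor, and evaluating it amounts to an elementary but delicate manipulation of Euler products at $\mathfrak p$ (using \eqref{7}, where $\lambda_f(\mathfrak p)=0$ when $\nu\ge 2$) together with a contour shift that extracts the pole of $\zeta(s)$ and gives the $\log q$ powers.

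The main obstacle I expect is twofold. First, the \emph{off-diagonal / Kloosterman} terms: because every modulus is a multiple of $q=\mathfrak p^\nu$ which is \emph{huge}, one hopes these are tiny, but the sieving to the newspace reintroduces Kloosterman sums of smaller modulus $\mathfrak p^j$ with $j<\nu$, and one must bound $\sum_{j<\nu}\mathfrak p^{-j}S(\dots;\mathfrak p^j)$-type sums using Weil's bound and the rapid decay of the Bessel function $J_{k-1}$, checking that the total stays $o(\mathscr M_1)$ and $o((\log q)^3)$ — this forces the mollifier length exponent $\Delta$ to be taken small (some explicit $\Delta<1$ depending only on $k$), and requires $\nu$ large (hence the constant $\nu_0(k,\mathfrak p)$). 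Second, the \emph{local analysis at $\mathfrak p$}: since $\lambda_f(\mathfrak p^a)$ is governed by \eqref{6}--\eqref{7} rather than by a generic Hecke eigenvalue, the Euler factor at $\mathfrak p$ in the moment computations degenerates, and one must verify that the resulting arithmetic factor $c(\mathfrak p)$ is bounded away from $0$ and $\infty$ uniformly in $\nu$ so that $\mathscr M_1\gg 1$ survives — this is exactly where "the arithmetic of the level" enters, and it is the point of the paper. Once these two points are settled, choosing the mollifier coefficients $x_m\approx\mu(m)\,g(\log(M/m)/\log M)$ for a suitable smooth $g$ (as in the classical case) optimizes the ratio and completes the proof; I would not expect to recover a positive proportion here, only the $(\log q)^{-3}$ bound, the extra $\log$ compared with Duke's $(\log q)^{-2}$ reflecting the loss in the newform sieve for prime-power level.
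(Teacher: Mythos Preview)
Your proposal follows a different route from the paper and contains a genuine gap that the paper's method is specifically designed to avoid.

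The paper does \emph{not} mollify. It computes the unmollified second and third harmonic moments
\[
M_2=\sumh_{f\in{\rm H}^*_k(q)}L(\tfrac12,f)^2,
\qquad
M_3=\sumh_{f\in{\rm H}^*_k(q)}L(\tfrac12,f)^3,
\]
obtaining $M_2=(\varphi(q)/q)^2\log q+O(1)$ and $M_3=\tfrac{1}{6}(\varphi(q)/q)^4(\log q)^3+O((\log q)^2)$, and then applies H\"older in the form $M_2^3\le M_3^2\cdot\sumh_{L(\frac12,f)\neq 0}1$ (using positivity~\eqref{positivity}) to conclude. The choice of $M_2$ and $M_3$ rather than $M_1$ and $M_2$ is not arbitrary: the approximate functional equation for $L(\tfrac12,f)^2$ involves $\varepsilon_f^2=1$ and so is sign-free, while $L(\tfrac12,f)^3$ is written as $L(\tfrac12,f)^2\cdot L(\tfrac12,f)$ and the $\varepsilon_f$-dependent piece of the latter automatically vanishes whenever $\varepsilon_f=-1$ because then $L(\tfrac12,f)=0$ (see the argument around~\eqref{101}).

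Your plan, by contrast, hinges on evaluating the first mollified moment $\mathscr M_1$, and this is precisely the step that breaks down. The approximate functional equation for $L(\tfrac12,f)$ reads $L(\tfrac12,f)=(1+\varepsilon_f)\sum_n\lambda_f(n)n^{-1/2}T(n/\hat q)$, so $\mathscr M_1$ contains a term $\sumh_f\varepsilon_f\lambda_f(m)\lambda_f(n)$. In the squarefree case one has the explicit formula $\varepsilon_f=i^k\mu(q)\sqrt{q}\,\lambda_f(q)$, which converts this into a standard trace; for $q=\mathfrak p^\nu$ with $\nu\ge 2$ no such formula is available (indeed $\lambda_f(\mathfrak p)=0$ by~\eqref{7}, so the squarefree recipe degenerates), and the paper states explicitly in the footnotes to~\eqref{EF} and~\eqref{99} that this is what prevents computing the first moment and forces the switch to $M_2,M_3$. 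Your sentence ``one must either sum it out or insert the harmonic weights in a way that the $\varepsilon_f$-dependent terms are negligible'' names the obstacle but does not resolve it; without an explicit handle on the root number at prime-power level, you cannot show $\mathscr M_1\gg 1$.

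A secondary remark: if the mollifier worked, it would be expected to remove the logarithms and yield a positive proportion, not $(\log q)^{-3}$; getting only $(\log q)^{-3}$ with a mollifier would mean the mollifier is doing no work, and one might as well drop it --- which is exactly the paper's choice.
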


Pour ce faire, dans un premier temps, 
on \'etablira une formule de trace sur les formes primitives de niveau 
$\mathfrak{p}^\nu$ qui prendra la forme suivante :
\begin{equation}\label{9}
\begin{aligned}
\Delta^*_q(m,n)
& :=\sum_{f\in {\rm H}_{k}^{*}(q)} \omega_q(f) \lambda_f(m) \lambda_f(n)
\\
& \,= \frac{\varphi(q)}{q} 
\delta_{m, n} +R(m,n,k,q),
\end{aligned}
\end{equation}
o\`u $\varphi(q)$ est la fonction d'Euler et $\delta(m, n)$ est le symbole de Kronecker
(voir le Th\'eor\`eme \ref{thm_2} ci-dessous).
Cette formule de trace sera \'etablie de la mani\`ere suivante :
\begin{itemize}
\item
Nous commen\c cons par une formule de trace sous la forme
\begin{equation}\label{8}
\Delta_q(m, n) := \sum_{f\in B_k(q)} \omega_q(f) \lambda_f(m) \lambda_f(n),
\end{equation}
o\`u $B_k(q)$ est une base orthogonale quelconque de $S_{k}(q)$.
Il est \`a noter que cette d\'efinition est ind\'ependante du choix de la base orthogonale 
puisque $\Delta_q(m, n)$ est le coefficient de Fourier d'une s\'erie de Poincar\'e \cite[Lemma 3.3]{DI82}.
\`A l'aide d'une d\'ecomposition permettant de passer des formes paraboliques aux formes primitives de niveaux inf\'erieurs, on peut exprimer $\Delta_q(m, n)$ en fonction des nombres $\Delta^{*}_{q'}(m,n)$, 
o\`u $q'\mid q$, tout en rendant n\'egligeable la contribution des formes de niveau $1$. 
Puis par inversion de M\"obius on pourra exprimer $\Delta^*_q(m, n)$ 
en fonction des nombres $\Delta_{q'}(m, n)$.
\item
Apr\`es avoir \'etabli une formule de trace dans $S_{k}(q)$ (de type \cite{ILS} \'egalit\'e $(2.12)$) provenant de l'expression de $\Delta_q(m, n) $ comme des sommes de sommes de Kloosterman, on en d\'eduit alors une formule de trace dans ${\rm H}^{*}_{k}(q)$.
\end{itemize}

\smallskip

Dans un second temps, on calculera au quatri\`eme et cinqui\`eme paragraphe, 
le deuxi\`eme et le troisi\`eme moment au point critique et ce \`a l'aide de la formule trace, pour obtenir : 
\begin{align*}
M_2
& = \bigg(\frac{\varphi(q)}{q}\bigg)^2 \log{q}+O_{k, \mathfrak{p}}(1),
\\
M_3
& = \frac{1}{6} \bigg(\frac{\varphi(q)}{q}\bigg)^4 (\log{q})^3+O_{k, \mathfrak{p}}\big((\log q)^2\big),
\end{align*}
o\`u 
\begin{equation}\label{defMr}
M_r := \sumh_{f\in {\rm H}^{*}_{k}(q)} L({\textstyle\frac{1}{2}}, f)^r.
\end{equation}
Enfin une simple application de l'in\'egalit\'e de H\"older donne le Th\'eor\`eme \ref{thm_1}.

\smallskip

\noindent{\bf Notations.}
Dans ce texte, $\tau(n)$ (resp. $\omega(n)$) est le nombre des diviseurs de $n$ (resp. le nombre de facteurs premiers distincts) et  $\varphi(n)$ la fonction indicatrice d'Euler.

\smallskip

\noindent{\bf Remerciements.}
L'auteur tient \`a remercier ses directeurs de th\`ese Jie Wu (Nancy) et Emmanuel Royer (Clermont-Ferrand) pour toute leur patience et leurs encouragements r\'eguliers durant l'\'elaboration de ce travail.

\section{Formule de trace harmonique au niveau $\mathfrak{p}^\nu$ avec $\nu\geq 1$}

Le but de ce paragraphe est d'\'etablir une formule de trace au niveau $\mathfrak{p}^\nu$ avec $\nu\geq 1$.
Notre r\'esultat peut \^etre consid\'er\'e comme compl\'ementaire au Corollaire 2.10 de
Luo, Iwaniec \& Sarnak \cite{ILS}.

\subsection{Enonc\'e du r\'esultat}

\begin{thm}\label{thm_2}
Soient $k\ge 2$ un entier pair,
$\mathfrak{p}$ un nombre premier et $q=\mathfrak{p}^{\nu}$ avec $\nu\geq 1$.
Alors pour tous entiers $m\ge 1$ et $n\ge 1$, on a
\begin{equation}\label{10}
\Delta^*_q(m, n) 
= \begin{cases}
\phi(\nu, \mathfrak{p})\delta_{m, n} + O({\mathscr R})
& \text{si $\mathfrak{p}\nmid mn$ et $\nu\ge 1$},
\\\noalign{\vskip 1mm}
0 & \text{si $\mathfrak{p}\mid mn$ et $\nu\ge 2$},
\end{cases}
\end{equation}
o\`u $\delta_{m, n}$ est le symbole de Kronecker,
\begin{equation}\label{defphinup}
\phi(\nu, \mathfrak{p})
:=\begin{cases}
1                                                                & \text{si $\nu=1$}
\\\noalign{\vskip 1mm}
1-(\mathfrak{p}-\mathfrak{p}^{-1})^{-1} & \text{si $\nu=2$}
\\\noalign{\vskip 1mm}
1-\mathfrak{p}^{-1}                                  & \text{si $\nu\ge 3$}
\end{cases}
\end{equation}
et
\begin{equation}\label{defR}
{\mathscr R}
:= \frac{\sqrt{mn\mathfrak{p}} \{\log(2(m,n))\}^2}{k^{4/3}q^{3/2}}
+\frac{\tau(m)\tau(n)}{q}.
\end{equation}
La constante impliqu\'ee est absolue.
Le deuxi\`eme terme d'erreur $\tau(m)\tau(n)/q$ n'existe que 
s'il y a des formes de poids $k$ et de niveau 1.
\end{thm}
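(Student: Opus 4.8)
The strategy is to start from the Petersson formula for an arbitrary orthonormal basis $B_k(q)$ of $S_k(q)$ and to sieve out the contribution of oldforms, exploiting that $q=\mathfrak p^\nu$ has only the divisors $\mathfrak p^j$ with $0\le j\le\nu$. Concretely, the plan is as follows.

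\emph{Step 1: the raw Petersson formula.} I would first recall that for any orthonormal basis $B_k(q)$ one has
\begin{equation*}
\Delta_q(m,n)=\sum_{f\in B_k(q)}\omega_q(f)\lambda_f(m)\lambda_f(n)
=\delta_{m,n}+2\pi i^{-k}\sum_{c\equiv 0\,(q)}\frac{S(m,n;c)}{c}\,J_{k-1}\!\Bigl(\frac{4\pi\sqrt{mn}}{c}\Bigr),
\end{equation*}
and bound the Kloosterman term by Weil's bound together with $J_{k-1}(x)\ll (x/2)^{k-1}/\Gamma(k)$, giving a contribution $\ll \sqrt{mn\,\mathfrak p}\,\{\log 2(m,n)\}^2/(k^{4/3}q^{3/2})$ after summing over $c=q c'$; this is exactly the first term of $\mathscr R$. (The sharpened $k^{-4/3}$ gain comes from the standard refinement of the Bessel estimate, which I would simply cite.)

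\emph{Step 2: from $S_k(q)$ to primitive forms.} Here is the combinatorial heart. A natural orthogonal basis of $S_k(\mathfrak p^\nu)$ is obtained from primitive forms $g\in\mathrm H_k^*(\mathfrak p^j)$, $0\le j\le\nu$, by taking the shifts $z\mapsto g(\mathfrak p^\ell z)$ for $0\le\ell\le\nu-j$. After Gram--Schmidt on each packet of shifts of a fixed $g$, one gets weights that can be written explicitly in terms of $\lambda_g(\mathfrak p)$ and local Euler factors; using \eqref{6} and, crucially, the Deligne relation \eqref{7} (which makes $\lambda_g(\mathfrak p)^2$ equal to $0$ or $1/\mathfrak p$ according to whether $\mathfrak p^2\mid j$ or $\mathfrak p\,\|\,\mathfrak p^j$), the inner products of the orthogonalized shifts collapse to short closed-form expressions. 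Writing this out yields an identity of the shape
\begin{equation*}
\Delta_q(m,n)=\sum_{j=0}^{\nu}\ \sum_{g\in\mathrm H_k^*(\mathfrak p^j)}\omega_{\mathfrak p^j}(g)\,\lambda_g(m)\lambda_g(n)\,c_j(m,n),
\end{equation*}
with explicit rational coefficients $c_j(m,n)$ depending only on $\mathfrak p$, $\nu$, $j$ and on the $\mathfrak p$-parts of $m,n$. Since we are interested in $\mathfrak p\nmid mn$ the $c_j$ simplify further; then Möbius inversion over the chain of divisors $\mathfrak p^0\mid\mathfrak p^1\mid\cdots\mid\mathfrak p^\nu$ expresses $\Delta^*_q(m,n)$ as a short linear combination $\sum_j \alpha_j\,\Delta_{\mathfrak p^j}(m,n)$. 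The level-$1$ forms contribute a term bounded by $\tau(m)\tau(n)/q$ (this is the stated second error term, present only if $S_k(1)\neq 0$).

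\emph{Step 3: assembling the main term and the case $\mathfrak p\mid mn$.} Plugging the Step 1 evaluation $\Delta_{\mathfrak p^j}(m,n)=\delta_{m,n}+O(\cdots)$ into $\sum_j\alpha_j\,\Delta_{\mathfrak p^j}(m,n)$, the diagonal terms combine to $\phi(\nu,\mathfrak p)\delta_{m,n}$ — one checks the three cases $\nu=1,2,\ge 3$ directly against \eqref{defphinup} — and the off-diagonal terms are absorbed into $O(\mathscr R)$ since $\sum_j|\alpha_j|\ll 1$. For the second line of \eqref{10}, when $\mathfrak p\mid mn$ and $\nu\ge 2$: by \eqref{7} every primitive form $g$ of level $\mathfrak p^j$ with $j\ge 2$ has $\lambda_g(\mathfrak p)=0$, and for $j\le 1$ one has $j<\nu$ so \eqref{6} forces $\lambda_g$ evaluated at the $\mathfrak p$-divisible argument to be controlled in a way that makes the corresponding contribution cancel identically after the inversion; tracking this through shows $\Delta^*_q(m,n)=0$ exactly.

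\emph{Main obstacle.} The delicate point is Step 2: carrying out the Gram--Schmidt orthogonalization of the oldform packets \emph{exactly} (not just up to error terms) and obtaining the precise coefficients $c_j$ and hence $\alpha_j$ — in particular getting $\phi(2,\mathfrak p)=1-(\mathfrak p-\mathfrak p^{-1})^{-1}$ rather than the naive guess requires using the Deligne bound \eqref{7} in the borderline case $\mathfrak p\,\|\,\mathfrak p^2$ correctly. Everything else is either a citation (Petersson, Weil, the Bessel estimate) or bookkeeping.
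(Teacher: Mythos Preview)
Your plan is essentially the paper's own proof: Step~1 is its Lemme~\ref{lem_4}, Step~2 is its Lemmes~\ref{lem_5}--\ref{lem_11} (explicit Gram--Schmidt on the oldform packets, then M\"obius inversion along the divisor chain $\mathfrak p^0\mid\cdots\mid\mathfrak p^\nu$), and Step~3 is the final assembly. One simplification worth noting: for the second line of \eqref{10} the paper does \emph{not} go through the inversion formula at all --- when $\nu\ge 2$ and $\mathfrak p\mid m$ (say), every $f\in\mathrm H_k^*(\mathfrak p^\nu)$ satisfies $\lambda_f(\mathfrak p)=0$ by \eqref{7}, hence $\lambda_f(m)=\lambda_f(\mathfrak p)\lambda_f(m/\mathfrak p)=0$ by \eqref{6}, and $\Delta^*_q(m,n)=0$ follows directly from its definition; your route via the inversion is unnecessarily circuitous (and note that the paper's inversion formula \eqref{60} is stated only under the hypothesis $\mathfrak p\nmid mn$).
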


\begin{corollaire}\label{cor_3}
Soient $k\ge 2$ un entier pair,
$\mathfrak{p}$ un nombre premier et $q=\mathfrak{p}^{\nu}$ avec $\nu\geq 3$.
Alors pour tous entiers $m\ge 1$ et $n\ge 1$, on a
$$
\Delta^{*}_q(m,n) = \begin{cases}
\displaystyle\frac{\varphi(q)}{q} \delta_{m, n}
+O_{k, \mathfrak{p}}\bigg(\frac{\sqrt{mn} \{\log(2(m,n))\}^2}{q^{3/2}}
+\frac{\tau(m)\tau(n)}{q}\bigg)
& \text{si $\mathfrak{p}\nmid mn$},
\\\noalign{\vskip 1mm}
0 & \text{si $\mathfrak{p}\mid mn$},
\end{cases}
$$
o\`u la constante impliqu\'ee ne d\'epend que de $k$ et $\mathfrak{p}$.
\end{corollaire}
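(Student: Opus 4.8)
The plan is to deduce the corollary directly from Theorem \ref{thm_2} by specializing to $\nu\ge 3$ and simplifying both the main term and the error term. First I would observe that for $q=\mathfrak{p}^\nu$ with $\nu\ge 3$ one has $\varphi(q)/q = 1-\mathfrak{p}^{-1}$, which is exactly the value $\phi(\nu,\mathfrak{p})$ appearing in \eqref{defphinup} in that range. Hence the main term $\phi(\nu,\mathfrak{p})\delta_{m,n}$ in the first case of \eqref{10} is literally $\tfrac{\varphi(q)}{q}\delta_{m,n}$, and the second case ($\mathfrak{p}\mid mn$, which is covered since $\nu\ge 2$) gives $0$ verbatim.

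Next I would handle the error term ${\mathscr R}$ from \eqref{defR}. The second summand $\tau(m)\tau(n)/q$ is already in the desired shape (and, as noted in Theorem \ref{thm_2}, is present only when $S_k(1)\neq\{0\}$, a caveat inherited without change). For the first summand, I would absorb the $\mathfrak{p}$-dependence and the $k$-dependence into the implied constant: since $k\ge 2$ is fixed, $k^{-4/3}\ll 1$, and since $\mathfrak{p}$ is fixed, $\sqrt{\mathfrak{p}}\ll_{\mathfrak{p}} 1$. Therefore
$$
\frac{\sqrt{mn\mathfrak{p}}\,\{\log(2(m,n))\}^2}{k^{4/3}q^{3/2}}
\ll_{k,\mathfrak{p}} \frac{\sqrt{mn}\,\{\log(2(m,n))\}^2}{q^{3/2}},
$$
which is precisely the first term in the corollary's error. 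Combining, ${\mathscr R}\ll_{k,\mathfrak{p}} \dfrac{\sqrt{mn}\{\log(2(m,n))\}^2}{q^{3/2}}+\dfrac{\tau(m)\tau(n)}{q}$, and substituting into \eqref{10} yields the stated formula, with an implied constant depending only on $k$ and $\mathfrak{p}$.

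There is essentially no obstacle here: the corollary is a clean restatement of the $\nu\ge 3$ branch of Theorem \ref{thm_2}, the only content being the identification $\phi(\nu,\mathfrak{p})=\varphi(\mathfrak{p}^\nu)/\mathfrak{p}^\nu$ for $\nu\ge 3$ and the bookkeeping of constants. If anything requires a word of care, it is making explicit that the hypothesis $\nu\ge 3$ (rather than merely $\nu\ge 2$) is what forces the main term to collapse to the simple shape $1-\mathfrak{p}^{-1}$; for $\nu=2$ the main term would instead be $1-(\mathfrak{p}-\mathfrak{p}^{-1})^{-1}$, which does not equal $\varphi(\mathfrak{p}^2)/\mathfrak{p}^2$, so the restriction is genuinely used. $\hfill\square$
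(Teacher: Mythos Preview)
Your proposal is correct and matches the paper's implicit reasoning: the corollary is stated immediately after Th\'eor\`eme~\ref{thm_2} with no separate proof, precisely because it is the specialization to $\nu\ge 3$ together with the observation $\phi(\nu,\mathfrak{p})=1-\mathfrak{p}^{-1}=\varphi(q)/q$ and the absorption of $\sqrt{\mathfrak{p}}$ and $k^{-4/3}$ into the $O_{k,\mathfrak{p}}$ constant. Your closing remark about why $\nu\ge 3$ (rather than $\nu\ge 2$) is needed is also accurate and useful.
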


{\sl Remarque 1}.
Si on applique le Th\'eor\`eme~\ref{thm_2} \`a $\nu=1$, 
on retrouve si $ k\in \mathcal{K}=\left\{2 , 4 , 6 , 8 , 10 , 14 \right\}$, 
la formule de trace (\ref{10}) en tenant compte du fait qu'alors 
$S_{k}(\mathfrak{p})=S^{*}_{k}(\mathfrak{p})$ \cite[Chap. 7]{Ser} et que donc 
$\Delta^{*}_{\mathfrak{p}}=\Delta_{\mathfrak{p}}$ 
et du fait qu'alors le deuxi\`eme terme de droite dans (\ref{defR}) est nul. 

\subsection{Lemmes auxiliaires}
Commen\c cons par \'etablir une formule de trace vraie dans tout l'espace des formes paraboliques de niveau $\mathfrak{p}^{\nu}$ avec $\nu\geq 1$ et de poids $k$.

\begin{lemme}\label{lem_4}
Soient $k\ge 2$ un entier pair, $\mathfrak{p}$ un nombre premier,
$m\ge 1, n\ge 1$ et $q=\mathfrak{p}^{\nu}$ avec $\nu\ge 0$. 
Alors
\begin{equation}\label{14}
\Delta_q(m, n)=\delta_{m, n} 
+O\bigg(
\frac{\sqrt{mn(m,n,q)} \{\log(2(m,n))\}^2}{k^{4/3}q^{3/2}}
\bigg)
\end{equation}
o\`u la constante impliqu\'ee est absolue.
\end{lemme}

\begin{proof}
Selon \cite[Page 248-9]{DI82}, on a
$$
\Delta_q(m, n)=\delta_{m, n} 
+2\pi     i^k\sum_{c\equiv0({\rm mod}\,q)}\frac{S(m,n;c)}{c}J_{k-1}\left(\frac{4\pi\sqrt{mn}}{c}\right),
$$
o\`u $S(m,n;c)$ est la somme de Kloosterman d\'efinie par
\[S(m,n;c)=\sum_{dd'\equiv1({\rm mod}\,c)}\exp{\left(2\pi i\left(\frac{dm+d'n}{c}\right)\right)}\]
et $J_{k-1}$ est la fonction de Bessel de premi\`ere esp\`ece.
En utilisant les majorations classiques (\cite[Pages 60-1]{Iw 97} et \cite[Page 245]{DI82}) :
$$
|S(m,n;c)|\leq 2^{\omega(c)} (m,n,c)^{1/2} c^{1/2},
\qquad
J_{k-1}(x)\ll k^{-4/3} x,
$$
on peut d\'eduire
\begin{align*}
\Delta_q(m,n)
& = \delta_{m, n} +O\bigg(\frac{\sqrt{mn}}{k^{4/3}q^{3/2}}
\sum_{r\geq 1} \frac{2^{\omega(qr)}}{r^{3/2}}(m,n,qr)^{1/2}\bigg).
\end{align*}
Puisque $\omega(qr)\leq \omega(r)+1$
et $(m,n,qr)\mid(m,n,q)(m,n,r)$, il suit, en posant $d=(m,n,r)$ et $r=d\ell$,
\begin{align*}
\Delta_q(m, n)
& =\delta_{m, n} 
+O\bigg(\frac{\sqrt{mn(m,n,q)}}{k^{4/3}q^{3/2}}\sum_{r\geq 1}\frac{2^{\omega(r)}(m,n,r)^{1/2}}{r^{3/2}}\bigg)
\\
& =\delta_{m, n} 
+O\bigg(\frac{\sqrt{mn(m,n,q)}}{k^{4/3}q^{3/2}}
\sum_{d\mid (m, n)} \frac{2^{\omega(d)}}{d} \sum_{\ell\geq 1}\frac{2^{\omega(\ell)}}{\ell^{3/2}}\bigg)
\\
& =\delta_{m, n} 
+O\bigg(\frac{\sqrt{mn(m,n,q)}}{k^{4/3}q^{3/2}} \log^2(2(m, n))\bigg),
\end{align*}
o\`u l'on a d\'ej\`a utilis\'e les estimations classiques 
(voir (\ref{estimation2}) du Lemme \ref{lem_estimation} ci-dessous)
\begin{align*}
\sum_{d\mid (m, n)} \frac{2^{\omega(d)}}{d}
& \le \sum_{d\le (m, n)} \frac{\tau(d)}{d}
= \int_{1-}^{(m, n)} \frac{1}{t} \dd \sum_{d\le t} \tau(d)
\\
& = \int_{1-}^{(m, n)} \frac{1}{t} \dd O(t\log t)
\ll \log^2(2(m,n)).
\end{align*}
Cela ach\`eve la d\'emonstration.
\end{proof}

Dans le but d'exprimer $\Delta_q(m, n)$ en fonction de $\Delta^*_q(m, n)$, 
on utilise la d\'ecomposition orthogonale :
\begin{equation}\label{15}
S_{k}(q)=\bigoplus_{\ell m'=q} \bigoplus_{f\in {\rm H}_{k}^{*}(m')} S_k(\ell, f)
\end{equation} 
o\`u (si $f\in{{\rm H}_{k}^{*}(m')}$), $S_k(\ell, f) $ est l'espace engendr\'e par les formes :
\begin{equation}\label{16}
f_{|d}(z):=d^{k/2} f(dz)
\end{equation}
o\`u $d$ d\'esigne un diviseur de $\ell$.

\'Etant donn\'e la d\'efinition intrins\`eque (\ref{8}) de $\Delta_q$, il sera n\'ecessaire de d\'eterminer une base orthogonale de $S_k(\ell, f)$ pour tout diviseur $\ell$ de $q$. Pour cela, on introduit des fonctions de la forme:
\[f_d =\sum_{c\mid \ell}x_{d}(c,f)f_{|c}\]
o\`u $q=\ell m'$, $d$ est un diviseur de $\ell$, $f\in {\rm H}_{k}^{*}(m') $. 

Si $ m'>1$, les coefficients $x_{d}(c,f)$ sont d\'efinis de la fa\c con suivante:
\begin{equation}\label{17}x_{d}(c,f):=\begin{cases}
\displaystyle\frac{\mu{(r)}\lambda_f(r)}{\sqrt{r\rho_{f,m'}(d)}} & \text {si $ d=rc$ ,}
\\\noalign{\vskip 1mm}
0 & \text{sinon}
\end{cases}
\end{equation}
o\`u
\begin{equation}\label{18}
\rho_{f, m'}(d) := \sum_{n \mid d} \mu(n)\lambda_f(n)^2 n^{-1}.
\end{equation}

Si $m'=1$, on d\'efinit
\begin{equation}\label{21}
f_{\mathfrak{p}^r}:=\begin{cases}
f 
& \text{si $r=0$},
\\\noalign{\vskip 2mm}
\displaystyle 
\frac{1}{\sqrt{\sigma_f}} \left(f_{|\mathfrak{p}}-\frac{P_1(\lambda_f(\mathfrak{p}))}{\sqrt{\mathfrak{p}}}f_{|1}\right)
& \text{si $r=1$},
\\
\displaystyle 
\frac{1}{\sqrt{(1-\mathfrak{p}^{-2})\sigma_f}} \left(f_{|\mathfrak{p}^r}
-\nu'(\mathfrak{p})\frac{P_1(\lambda_f(\mathfrak{p}))}{\sqrt{\mathfrak{p}}} f_{|\mathfrak{p}^{r-1}}
+\frac{1}{\mathfrak{p}} f_{|\mathfrak{p}^{r-2}}\right)
& \text{si $r\ge 2$},
\end{cases}
\end{equation}
o\`u
\begin{equation}\label{22}
P_1(X):=\frac{X}{\nu'(\mathfrak{p})},
\qquad
\sigma_f:=1-\frac{P_1(\lambda_f(\mathfrak{p}))^2}{\mathfrak{p}},
\qquad
\nu'(\mathfrak{p}):=1+\frac{1}{\mathfrak{p}}.
\end{equation}

{\sl Remarque 2}.
Dans le cas o\`u $m'>1$, en posant $d=\mathfrak{p}^\delta$, alors:
\[\rho_{f,m'}(d)
=\begin{cases}
1 & \text {si $\delta=0$ ou $\delta\geq 1$ et ${\mathfrak{p}^{2}}\mid m'$},
\\
1-\mathfrak{p}^{-2} & \text{sinon.}
\end{cases}\]

Montrons un premier r\'esultat:

\begin{lemme}\label{lem_5}
Soient $k\ge 2$ un entier pair, $\mathfrak{p}$ un nombre premier
et $q=\mathfrak{p}^{\nu}$ avec $\nu \geq 1$.
Si $m'$ est un entier tel que $m'\mid q$ et $f\in {\rm H}_{k}^{*}(m')$, 
alors pour tout entier $r\geq 0$ la s\'erie de Dirichlet 
\[R_f(\mathfrak{p}^r, s)
:= \sum_{n\geq 1} \lambda_f(n) \lambda_f(n\mathfrak{p}^r) n^{-s}\]
v\'erifie 
\begin{equation}\label{25}
{R_f(\mathfrak{p}^r, s)=Z_f(\mathfrak{p}^r, m', s) L(s,f\otimes{f})}\end{equation}
o\`u
\begin{equation}\label{29}
L(s,f\otimes{f}):=\sum_{n\geq1} \lambda_f(n)^2 n^{-s}
\end{equation} 
et
\begin{equation}\label{26}
Z_f(\mathfrak{p}^r, m', s)
:=\begin{cases}
P_{r}(\lambda_{f}(\mathfrak{p}),s)   & \text{si $ m'=1$}
\\\noalign{\vskip 1mm}
\lambda_{f}({{\mathfrak{p}}^{r}})      & \text{sinon}
\end{cases}
\end{equation}
avec
\begin{equation}\label{27}
\begin{cases}
P_0(X,s) :=1, 
\\\noalign{\vskip 1mm}
P_1(X,s) := X/(1+\mathfrak{p}^{-s}),
\\\noalign{\vskip 1mm}
P_r(X,s):=X P_{r-1}(X,s)-P_{r-2}(X,s)
\quad(r\ge 2).
\end{cases}
\end{equation}
\end{lemme}

\begin{proof}
Si on utilise l'hypoth\`ese $m'>1$ dans l'\'egalit\'e \eqref{6}, on a\footnote{C'est ici qu'intervient la diff\'erence entre les cas $m'=1$ et $m'>1$ due \`a la multiplicit\'e des coefficients $\lambda_f(r)$. Voir la diff\'erence entre (\ref{30}) et (\ref{25}).} :
\begin{equation}\label{30}
R_{f}\left(\mathfrak{p}^{r},s\right)=\lambda_f(\mathfrak{p}^r) L(s,f\otimes{f}).
\end{equation}

Ensuite on consid\`ere le cas o\`u $m'=1$.
Si on \'ecrit chaque entier $n\geq{1}$ de fa\c con unique 
$n=n^{(\mathfrak{p})}n_{\mathfrak{p}}$ 
avec $ n_{\mathfrak{p}}\mid{{\mathfrak{p}}^{\infty}}$ et $(n^{(\mathfrak{p})}, \mathfrak{p})=1$, 
alors 
\begin{equation}\label{31}
R_f(\mathfrak{p}^r, s)
= \sum_{n\mid{{\mathfrak{p}}^{\infty}}} \lambda_f(n) \lambda_f(n\mathfrak{p}^r) n^{-s}
\sum_{(n,\mathfrak{p})=1} \lambda_f(n)^2 n^{-s}.
\end{equation}

Notons $R^*(\mathfrak{p}^r, s)$ la premi\`ere de ces deux sommes (on n'a pas indiqu\'e la d\'ependance en $f$ pour all\'eger les notations). 

Pour le cas $r=0$ :
\[R_{f}(1,s)=L(s,f\otimes{f})\] avec la notation \eqref{29}.

Quand $r=1$, on applique \eqref{5} sous la forme (avec $r=1$)
\begin{equation}\label{32}
\lambda_f(\mathfrak{p}^{k+r})
=\lambda_f(\mathfrak{p}) \lambda_f(\mathfrak{p}^{k+r-1})-\lambda_f(\mathfrak{p}^{k+r-2})
\end{equation}
pour \'ecrire
\begin{align*}
R^*(\mathfrak{p}, s)
& = \sum_{k\geq 0} \frac{\lambda_f(\mathfrak{p}^k) \lambda_f(\mathfrak{p}^{k+1})}{\mathfrak{p}^{ks}}
\\
& 
=\lambda_f(\mathfrak{p})
+\sum_{k\geq 1} \frac{\lambda_f(\mathfrak{p}^k) \lambda_f(\mathfrak{p}^{k+1})}{\mathfrak{p}^{ks}}
\\
& = \lambda_f(\mathfrak{p})
+\lambda_f(\mathfrak{p})\sum_{k\geq 1} \frac{\lambda_f(\mathfrak{p}^k)^2}{\mathfrak{p}^{ks}}
- \frac{1}{\mathfrak{p}^s} \sum_{k\geq{0}} 
\frac{\lambda_f(\mathfrak{p}^k) \lambda_f(\mathfrak{p}^{k+1})}{\mathfrak{p}^{ks}}
\\
& = \frac{\lambda_f(\mathfrak{p})}{1+\mathfrak{p}^{-s}}
\sum_{k\geq 0} \frac{\lambda_f(\mathfrak{p}^k)^2}{\mathfrak{p}^{ks}}.
\end{align*}
Avec l'\'egalit\'e (\ref{31}), on a :
\begin{equation}\label{33}
{R_{f}(\mathfrak{p},s)={\frac{\lambda_{f}(\mathfrak{p})}{{1+{{{\mathfrak{p}}^{-s}}}}}} L(s,f\otimes{f})}.
\end{equation}

Si $r\geq 2$, on utilise (\ref{32}) pour \'ecrire pour tout $k\geq 0$
$$
\sum_{k\geq 0} \frac{\lambda_f(\mathfrak{p}^k) \lambda_f(\mathfrak{p}^{k+r})}{\mathfrak{p}^{ks}}
= \lambda_f(\mathfrak{p}) \sum_{k\geq 0} 
\frac{\lambda_f(\mathfrak{p}^k) \lambda_f(\mathfrak{p}^{k+r-1})}{\mathfrak{p}^{ks}}
-\sum_{k\geq 0} 
\frac{\lambda_f(\mathfrak{p}^k) \lambda_f(\mathfrak{p}^{k+r-2})}{\mathfrak{p}^{ks}}
$$
ce qui signifie que :
\[R^{*}({\mathfrak{p}}^{r},s)=\lambda_f(\mathfrak{p}) R^{*}({\mathfrak{p}}^{r-1},s)-R^{*}({\mathfrak{p}}^{r-2},s)\]
en particulier cela donne, pour tout entier $r\geq2$ :
\begin{equation}\label{34}{ R_{f}({{\mathfrak{p}}^{r},s})=\lambda_f(\mathfrak{p}) R_{f}({\mathfrak{p}}^{r-1},s)-R_{f}({\mathfrak{p}}^{r-2},s)}.\end{equation}
Les r\'esultats pr\'ec\'edents concernant $R_{f}({{\mathfrak{p}}^{r},s})$ permettent de terminer la preuve de ce lemme.
\end{proof}

On a aussi le r\'esultat suivant :
\begin{lemme}\label{lemme 6}
Soient $k\ge 2$ un entier pair, $\mathfrak{p}$ un nombre premier
et $q=\mathfrak{p}^{\nu}$ avec $\nu \geq 1$.
On note $q=\ell m'$ et $\ell_1, \ell_2$ des entiers tels que $\ell_1, \ell_2\mid \ell$. 
Soit $f\in {\rm H}_{k}^{*}(m')$ 
alors 
\begin{equation}\label{35}
\left\langle f_{|\ell_1}, f_{|\ell_2}\right\rangle_q =\begin{cases}
\displaystyle \frac{\lambda_f(\overline{\ell})}{\sqrt{\overline{\ell}}} \left\langle f, f\right\rangle_q
& \text{si $m'>1$},
\\\noalign{\vskip 1mm}
\displaystyle \frac{P_j(\lambda_f(\mathfrak{p}))}{\sqrt{\overline{\ell}}} \left\langle f, f\right\rangle_q
& \text{si $m'=1$},
\end{cases}
\end{equation}
o\`u $\overline{\ell}:= \ell_1\ell_2/(\ell_1, \ell_2)^2={\mathfrak{p}}^{j}$, 
$P_{0}=1$, $P_{1}$ est donn\'e en (\ref{22}) et
\begin{equation}\label{37}
P_{n+2}=X P_{n+1}-P_{n}
\quad(n\geq 0).
\end{equation}
\end{lemme}

\begin{proof}
On note 
\[\Gamma_{\infty}:=\left\{
\begin{pmatrix}
1 & b
\\
0 & 1
\end{pmatrix} :
b\in \Z\right\}\]
et on consid\'ere
$$
G(s) :=  \left\langle E(z,s) f(\ell_1z) , f(\ell_2z)\right\rangle_q,
$$
o\`u la s\'erie d'Eisenstein 
\begin{equation}\label{38}
E(z,s) 
= \sum_{\gamma \in \Gamma_0(q)/\Gamma_\infty} (\im \gamma z)^s.
\end{equation}
est d\'efinie pour $z\in\HH$ et se prolonge en une fonction holomorphe si $\re s>{\frac{1}{2}}$
sauf en un p\^ole simple en 1 \cite[Lemma 3.7]{DI82}.

En utilisant la m\'ethode classique de d\'eroulement expos\'ee dans \cite[Pages 72-3]{ILS}, 
on obtient si $\ell':=\ell_1/(\ell_1, \ell_2)$, 
$\ell'':=\ell_2/(\ell_1, \ell_2)$ et $[\ell_1, \ell_2]=\ell_1\ell_2/(\ell_1, \ell_2)$ :
\begin{equation}\label{39}
G(s) = (4\pi)^{1-k-s} \Gamma(s+k-1) (\ell_1\ell_2)^{-(k-1)/2} [\ell_1, \ell_2]^{-s} R_f(\ell'\ell'', s),
\end{equation}
o\`u 
\begin{equation}\label{40}
{R_{f}(\ell'\ell'',s)
:= \sum_{n} \lambda_f(\ell'n) \lambda_f(\ell''n)n^{-s}
= \sum_{n} \lambda_f(n) \lambda_{f}(\ell'\ell''n)n^{-s}}
\end{equation}
car $(\ell', \ell'')=1$ implique que $\ell'=1$ ou $\ell''=1$.

En appliquant (\ref{25}) du Lemme \ref{lem_5}, l'\'egalit\'e (\ref{39}) devient 
\[ 
G(s) = (4\pi)^{1-k-s} \Gamma(s+k-1) (\ell_1\ell_2)^{(1-k)/2} [\ell_1, \ell_2]^{-s} Z_f(\ell'\ell'', m', s) L(s, f\otimes{f}).
\]
Cette \'egalit\'e appliqu\'ee \`a $\ell_1=\ell_2=1$ montre que 
$L(s, f\otimes{f})$ a un p\^ole simple en $s=1$ \'etant donn\'e que 
c'est le cas pour les s\'eries d'Eisenstein $E(z,s)$.
De plus (\ref{26}) et (\ref{27}) montrent que $Z_f(\ell'\ell'', m', s)$ est holomorphe en $s=1$, on posera $ Z_f(\ell'\ell'', m') = Z_f(\ell'\ell'', m', 1)$. 

On passe alors aux r\'esidus en $s=1$, pour cela, rappelons la  formule classique \cite{DI82} 
qui concerne les s\'eries d'Eisenstein :
\[\mathop{\rm Res}_{s=1} E(z,s)={\frac{3}{\pi\nu(q)}}\]
qui montre que ce r\'esidu $r$ est ind\'ependant de $z$.

On trouve donc 
\[ 
r\left\langle f(\ell_1z) , f(\ell_2z)\right\rangle_q
= \frac{\Gamma(k)}{(4\pi)^k} \frac{(\ell_1\ell_2)^{-(k-1)/2}}{[\ell_1, \ell_2]} Z_f(\ell'\ell'', m') 
\mathop{\rm Res}_{s=1} L(s, f\otimes{f}) 
\]
ce qui s'\'ecrit encore \`a l'aide de (\ref{16}) :
\begin{equation}\label{41}
r\left\langle f_{|\ell_1}, f_{|\ell_2}\right\rangle_q 
= \frac{\Gamma(k)}{(4\pi)^k}
\frac{(\ell_1\ell_2)^{1/2}}{[\ell_1, \ell_2]} Z_f(\ell'\ell'', m') \mathop{\rm Res}_{s=1}L(s, f\otimes{f}).
\end{equation}
Le cas $\ell_1=\ell_2=1$ donne
\begin{equation}\label{42}
r \left\langle f, f\right\rangle_q 
= \frac{\Gamma(k)}{(4\pi)^k} \mathop{\rm Res}_{s=1}L(s,f\otimes{f}).
\end{equation}

Enfin les \'egalit\'es (\ref{41}) et (\ref{42}) donnent
\begin{equation}\label{43}{ \left\langle f_{|\ell_1}, f_{|\ell_2}\right\rangle_{q} ={\frac{Z_{f}(\overline{\ell},m')} {\sqrt{\overline{\ell}} }} \left\langle f, f\right\rangle_{q}}.\end{equation}
Mais puisqu'on a :
\[Z_{f}({\mathfrak{p}}^{r},m')
=\begin{cases}
P_{r}(\lambda_f(\mathfrak{p}) ,1)  & \text {si $ m'=1$ }\\
\lambda_{f}({{\mathfrak{p}}^{r}}) & \text{ sinon}
\end{cases}\]
en posant 
\[P_{r}(X)=P_{r}(X,1)\]
on retrouve (\ref{35}) et (\ref{37}) gr\^ace aux relations (\ref{43}) et (\ref{27}).
\end{proof}

On aura aussi besoin d'un autre r\'esultat :
\begin{lemme}\label{lem_7}
Si $f\in {\rm H}_{k}^{*}(1)$, on a les \'egalit\'es suivantes :
\begin{align}
\left\langle f_{\mathfrak{p}^{r+1}}, f_1\right\rangle_q
& = \left\langle f_{\mathfrak{p}^{r+1}}, f_{\mathfrak{p}}\right\rangle_q=0
\quad(r\geq 1),
\label{44}
\\
\left\langle f_{\mathfrak{p}^{r+1}}, f_{\mathfrak{p}^2}\right\rangle_q
& =0
\hskip 28,5mm
(r\geq 2),
\label{44a}
\\
\left\langle f_{\mathfrak{p}^{r+1}}, f_{\mathfrak{p}^j}\right\rangle_q
& = \left\langle f_{\mathfrak{p}^r}, f_{\mathfrak{p}^{j-1}}\right\rangle_q
\hskip 9,5mm
(3\leq j\leq r).
\label{45}
\end{align}
\end{lemme}

\begin{proof}
En ce qui concerne (\ref{44}), ${\left\langle f_{{\mathfrak{p}}^{r+1}},{f_{1}}\right\rangle}_q$ vaut \`a un facteur multiplicatif pr\`es :
\[{\left\langle {f_{|{\mathfrak{p}}^{r+1}}}-\nu'(\mathfrak{p}){{\frac{P_{1}\left(\lambda_{f}\left({\mathfrak{p}}\right)\right)}{\sqrt{{\mathfrak{p}}}}}}f_{|{\mathfrak{p}}^{r}}+{\frac{1}{\mathfrak{p}}}f_{|{\mathfrak{p}}^{r-1}},{f_{|1}}\right\rangle}_q\]
qui vaut avec (\ref{35})
\[{{\frac{P_{r+1}\left(\lambda_{f}\left({\mathfrak{p}}\right)\right)}{\sqrt{{\mathfrak{p}}^{r+1}}}}}
-{{\frac{\nu'{P_{1}\left(\lambda_{f}\left({\mathfrak{p}}\right)\right)}P_{r}\left(\lambda_{f}\left({\mathfrak{p}}\right)\right)}{\sqrt{{\mathfrak{p}}^{r+1}}}}}+{{\frac{P_{r-1}\left(\lambda_{f}\left({\mathfrak{p}}\right)\right)}{\sqrt{{\mathfrak{p}}^{r+1}}}}}
\]
ce qui vaut aussi (\`a un facteur multiplicatif pr\`es) :
\[P_{r+1}\left(\lambda_{f}\left({\mathfrak{p}}\right)\right)
-\nu'\left({P_{1}\left(\lambda_{f}\left({\mathfrak{p}}\right)\right)}{P_{r}\left(\lambda_{f}\left({\mathfrak{p}}\right)\right)}+{P_{r-1}\left(\lambda_{f}\left({\mathfrak{p}}\right)\right)}\right).\]
Mais la r\'ecurrence (\ref{37}) donne 
$P_{r+1}=\nu'P_{1}P_{r}-P_{r-1}$.
Donc on a bien ${\left\langle f_{{\mathfrak{p}}^{r+1}},{f_{1}}\right\rangle}_q=0$.

Pour ce qui concerne ${\left\langle f_{{\mathfrak{p}}^{r+1}},{f_{\mathfrak{p}}}\right\rangle}_q$ et
${\left\langle f_{{\mathfrak{p}}^{r+1}},f_{{{\mathfrak{p}^{2}}}}\right\rangle}_q$ les calculs sont similaires et la r\'ecurrence (\ref{37}) permet d'\'etablir qu'ils sont nuls.

Passons \`a (\ref{45}) avec $3\leq{j}\leq{r}$ on a $\left\langle f_{{\mathfrak{p}}^{r+1}},f_{\mathfrak{p}^{j}}\right\rangle_q$ qui vaut :
$$
\biggl\langle%
f_{|\mathfrak{p}^{r+1}}%
-\nu'(\mathfrak{p})%
\frac{%
P_{1}\left(\lambda_{f}(\mathfrak{p})\right)%
}
{\sqrt{\mathfrak{p}}}f_{|\mathfrak{p}^{r}}
+\frac{1}{\mathfrak{p}}%
f_{|\mathfrak{p}^{r-1}}, 
f_{|\mathfrak{p}^{j}}%
-\nu'(\mathfrak{p})%
\frac{%
P_{1}\left(\lambda_{f}(\mathfrak{p})\right)%
}
{\sqrt{\mathfrak{p}}}f_{|\mathfrak{p}^{j-1}}
+\frac{1}{\mathfrak{p}}%
f_{|\mathfrak{p}^{j-2}}
\biggr\rangle_q.
$$
D'autre part 
\begin{align*}
& {\left\langle f_{{\mathfrak{p}}^{r}},f_{{{\mathfrak{p}}^{j-1}}}\right\rangle}_q
\\
& =
\biggl\langle%
f_{|\mathfrak{p}^{r}}%
-\nu'(\mathfrak{p})%
\frac{%
P_{1}\left(\lambda_{f}(\mathfrak{p})\right)%
}
{\sqrt{\mathfrak{p}}}f_{|\mathfrak{p}^{r-1}}
+\frac{f_{|\mathfrak{p}^{r-2}}}{\mathfrak{p}},%
f_{|\mathfrak{p}^{j-1}}%
-\nu'(\mathfrak{p})%
\frac{%
P_{1}\left(\lambda_{f}(\mathfrak{p})\right)%
}
{\sqrt{\mathfrak{p}}}f_{|\mathfrak{p}^{j-2}}
+\frac{f_{|\mathfrak{p}^{j-3}}}{\mathfrak{p}}%
\biggr\rangle_q.
\end{align*}
Avec (\ref{35}) on peut d\'evelopper ce produit scalaire  et on retrouve le m\^eme r\'esultat qu'avec ${\left\langle f_{{\mathfrak{p}}^{r+1}},f_{{{\mathfrak{p}}^{j}}}\right\rangle}_q$ car les indices $r$ et $j$ ont diminu\'e de $1$ mais leur diff\'erence elle reste la m\^eme, plus pr\'ecis\'ement :
\[{\big\langle f_{{|\mathfrak{p}}^{k}},f_{{{|\mathfrak{p}}^{j-k'}}}\big\rangle}_q
={\big\langle f_{{|\mathfrak{p}}^{k-1}},f_{{{|\mathfrak{p}}^{j-k'-1}}}\big\rangle}_q\]
pour $k=r-1, r$ ou $r+1$ et $k'=0, 1$ ou $2$. 
\end{proof}

\begin{lemme}\label{lem_8}
Soient $k\ge 2$ un entier pair, $\mathfrak{p}$ un nombre premier
et $q=\mathfrak{p}^{\nu}$ avec $\nu \geq 1$.
Soit $f\in {\rm H}_{k}^{*}(m')$ avec $q=\ell m'$.
\begin{itemize}
\item
Si $m'>1$, alors la famille $E^f_{\ell}:=\{f_d : d\mid \ell\}$ est une base orthogonale de l'espace $S_k(\ell, f)$ v\'erifiant $\left\|f_d \right\|_q=\left\|f\right\|_q$ pour tout d. 
\item
Si $m'=1$, alors la famille $E^{f}_{q}:=\{f_d : d\mid q\}$ est une base orthogonale de $S_{k}(q,f)$ 
v\'erifiant $\|f_d\|_q=\|f\|_q$ pour tout $d$.
 \end{itemize}
\end{lemme}

\begin{proof}
Dans un premier temps, supposons $m'>1$ et montrons l'\'egalit\'e (\ref{17}).
On pose :
\[
\delta_f(d_1, d_2)
= \frac{\left\langle f_{d_1}, f_{d_2}\right\rangle_q}{\left\langle f, f\right\rangle_q}  
\]
pour $d_{1} ,d_{2} \mid \ell$ (o\`u on rappelle que $f_d =\sum_{n\mid \ell}x_{d}(n,f)f_{|n} $).
Selon \eqref{35} on a : 
\[%
\delta_f(d_1, d_2)=%
\sum_{\ell_1, \ell_2\mid \ell}%
x_{d_1}(\ell_1, f)\overline{x}_{d_2}(\ell_2, f)\frac{\lambda_{f}(\overline{\ell})}{\sqrt{\overline{\ell}}}.
\]
\'Ecrivant $\ell_1=a\ell'$ et $\ell_2=a\ell''$ avec $a=(\ell_1, \ell_2)$, on a, \`a l'aide de (\ref{5}) et (\ref{6}) :
\begin{align*}
\delta_f(d_1,d_2)
& =\sum_{a\mid \ell} 
\sum_{\substack{\ell', \, \ell''\mid (\ell/a)\\ (\ell', \ell'')=1}} 
x_{d_1}(a\ell', f) \overline{x}_{d_2}(a\ell'', f) \frac{\lambda_f(\ell') \lambda_f(\ell'')}{\sqrt{\ell'\ell''}}
\\
& = \sum_{a\mid \ell} \sum_{b\mid (\ell/a)} \mu(b)
\sum_{\ell', \,\ell''\mid (\ell/(ab))} 
x_{d_1}(ab\ell', f) \overline{x}_{d_2}(ab\ell'', f) \frac{\lambda_f(b\ell') \lambda_f(b\ell'')}{b\sqrt{\ell'\ell''}}
\\
& = \sum_{a\mid \ell} \sum_{b\mid (\ell/a)} \frac{\mu(b)\lambda_f(b)^2}{b} \!
\sum_{\ell'\mid (\ell/(ab))} \! x_{d_1} (ab\ell', f) \frac{\lambda_f(\ell')}{\sqrt{\ell'}} \!
\sum_{\ell''\mid (\ell/(ab))} \! \overline{x}_{d_2}(ab\ell'', f) \frac{\lambda_f(\ell'')}{\sqrt{\ell''}}.
\end{align*}
En posant d\'esormais $c =ab$, on trouve :
\begin{equation}\label{47}{\delta_f(d_1, d_2) = \sum_{c\mid \ell}\rho_{f,m'}(c)
y_{d_{1}}(c,f)\overline{y}_{d_{2}}(c,f)}\end{equation}
o\`u on a not\'e :
\[
\rho_{f,m'}(c)=\sum_{n\mid c}\frac{\mu(n)\lambda_f^2(n)}{n},
\qquad
y_{d}(c,f) := \sum_{r\mid (\ell/c)} x_d(rc, f) \frac{\lambda_f(r)}{\sqrt{r}}.
\]
La formule d'inversion de M\"obius appliqu\'ee \`a l'\'egalit\'e ci-dessus donne :
\begin{equation}\label{48}
x_{d}(c, f) = \sum_{r\mid (\ell/c)} y_{d}(rc, f) \mu(r) \frac{\lambda_f(r)}{\sqrt{r}}.
\end{equation}
Pour que $E^f_{\ell}$ soit une base orthogonale de $S_k(\ell, f)$ 
il suffit (par la d\'efinition de $\delta_f(d_1, d_2)$) que $\delta_f$ soit le symbole de Kronecker, ce qui est expliqu\'e
\footnote{Notons que $y_{d}(c,f)$ existe puisque $\rho_{f,m'}(c)=\prod_{p\mid c}(1-\lambda_f(p)^2/p)$ 
implique que  $\rho_{f}(c) \in \left]0,1\right] $ \'etant donn\'e l'\'egalit\'e \eqref{7}.} par :
\[%
y_{d}(c,f)=\begin{cases}
1/\sqrt{\rho_{f,m'}(c)} & \text{si $ d=c$},
\\
0& \text{sinon}.
\end{cases}
\]
L'\'egalit\'e d\'efinissant $y_{d}(c,f)$ \'equivaut d'apr\`es 
(\ref{48}) \`a :
\[
x_{d}(c,f)=\begin{cases}
\displaystyle \frac{\mu(r)\lambda_f(r)}{\sqrt{r\rho_{f, m'}(d)}} 
& \text {si $ d=rc$},
\\\noalign{\vskip 1mm}
0                          
& \text{sinon}.
\end{cases}
\]
Ceci termine la preuve de l'othogonalit\'e dans le cas $m'>1$.

Passons \`a la preuve de la base orthogonale de $S_{k}(q,f)$.
On supposera d\'esormais $m'=1$ dans le reste de la preuve du Lemme \ref{lem_8}.
Pour v\'erifier que la famille propos\'ee en (\ref{21}) existe, 
montrons que $\sigma_{f}$ est strictement positif, en effet on sait que pour toute forme parabolique :
\[0\leq \lambda_f(\mathfrak{p})^2\leq \tau(\mathfrak{p})^2=4\]
puisque d'autre part $\mathfrak{p}\geq{2}$ alors
$9/2\leq \mathfrak{p}(1+\mathfrak{p}^{-1})^2$
ainsi on en conclut (d'apr\`es (\ref{22})) :
\begin{equation}\label{49}
\sigma_f\geq 1/9.
\end{equation}

Montrons que les formes propos\'ees ont toutes la m\^eme norme que celle de $f$.
Pour $f_{1}$ c'est imm\'ediat.
Pour $f_{\mathfrak{p}}$, d'apr\`es (\ref{21}) :
\[{\left\|f_{\mathfrak{p}}\right\|}_q^{2}={\frac{1}{\sigma_{f}}} \left({\left\|f_{|\mathfrak{p}}\right\|}_q^{2}
+{\frac{P^{2}_{1}(\lambda_f(\mathfrak{p}))}{\mathfrak{p}}}{\left\|f_{|1}\right\|}_q^{2}-2{{\frac{P_{1}(\lambda_f(\mathfrak{p}))}{\sqrt{\mathfrak{p}}}}}{\left\langle f_{|\mathfrak{p}},f_{|1}\right\rangle}_q\right).\]
Mais en utilisant (\ref{35}) on a :
\[%
\Vert f_{|\mathfrak{p}^r}\Vert_q^2=\Vert f\Vert_q^2%
\]
et
\[
{\left\langle f_{|\mathfrak{p}},f_{|1}\right\rangle}_q={{\frac{P_{1}(\lambda_f(\mathfrak{p}))}{\sqrt{\mathfrak{p}}}}}{\left\|f\right\|}_q^{2}.
\]
On trouve alors, \'etant donn\'e (\ref{22}) :
\[{\left\|f_{\mathfrak{p}}\right\|}_q^{2}={\left\|f\right\|}_q^{2}.\]

Il reste \`a traiter le cas de $f_{{\mathfrak{p}}^{r}}$ o\`u $r\geq 2$.
On notera $\nu'$ au lieu de $\nu'(\mathfrak{p})$ pour all�ger.
D'apr\`es (\ref{21}) :
\begin{align*}%
\Vert f_{\mathfrak{p}^r}\Vert_q^{2}
& = \frac{1}{(1-\mathfrak{p}^{-2})\sigma_{f}}%
\bigg(%
\Vert f_{|\mathfrak{p}^r}\Vert_q^{2}
+\frac{\nu^{'2}P^2_{1}(\lambda_{f}(\mathfrak{p}))}{\mathfrak{p}}%
\Vert f_{|\mathfrak{p}^{r-1}}\Vert_q^{2}%
+\frac{\Vert f_{|\mathfrak{p}^{r-2}}\Vert_q^2}{\mathfrak{p}^2}
\\%
& \quad
-2\nu'\frac{P_{1}(\lambda_{f}(\mathfrak{p}))}{\sqrt{\mathfrak{p}}}%
\left\langle%
f_{|\mathfrak{p}^r},f_{|\mathfrak{p}^{r-1}}%
\right\rangle_q%
+\frac{2}{\mathfrak{p}}
\left\langle%
f_{|\mathfrak{p}^r},f_{|\mathfrak{p}^{r-2}}%
\right\rangle_q
\\
& \quad
-2\nu'\frac{P_{1}(\lambda_{f}(\mathfrak{p}))}{\mathfrak{p}\sqrt{\mathfrak{p}}}%
\left\langle%
f_{|\mathfrak{p}^{r-1}},f_{|\mathfrak{p}^{r-2}}%
\right\rangle_q%
\bigg).%
\end{align*}
En utilisant (\ref{35}), on trouve que  $\Vert f_{\mathfrak{p}^r}\Vert_q^2$ vaut au facteur multiplicatif 
$\Vert f\Vert_q^2$ pr\`es :
$$
\frac{1}{(1-\mathfrak{p}^{-2})\sigma_f}
\left(1+\frac{\nu'(\nu'-2)P_1(\lambda_f(\mathfrak{p}))^2}{\mathfrak{p}}
+\frac{1+2P_2(\lambda_f(\mathfrak{p}))-2\nu'P_1(\lambda_f(\mathfrak{p}))^2}{\mathfrak{p}^2}\right).
$$
Utilisons la relation de r\'ecurrence (\ref{37}) qui peut se r\'e\'ecrire (\`a l'aide de (\ref{22})) :
\[P_{2}=\nu'P^{2}_{1}-1\]
pour transformer le terme pr\'ec\'edent en :
\[\frac{1}{{\sigma_{f}}({1-{{\mathfrak{p}}^{-2}}})} {{\left(1-{\frac{P^{2}_{1}(\lambda_f(\mathfrak{p}))}{\mathfrak{p}}}\right)}\left({1-{\frac{1}{{\mathfrak{p}}^{2}}}}\right)}\]
ce qui donne bien $\|f_{\mathfrak{p}^r}\|_q=\|f\|_q$.
pour tout $r\geq{2}$.

Montrons maintenant par r\'ecurrence sur $r\geq{1}$ 
que $\langle f_{\mathfrak{p}^r}, f_{\mathfrak{p}^k}\rangle_q=0$
pour tout $k<r$.
Pour $r=1$ : ${\left\langle f_{{\mathfrak{p}}},f_{1}\right\rangle}_q$
vaut \`a un facteur multiplicatif pr\`es :
\[{\left\langle f_{|{\mathfrak{p}}}-{\frac{P_{1}(\lambda_f(\mathfrak{p}))}{\sqrt{\mathfrak{p}}}} f_{|1},f_{|1}\right\rangle}_q\]
ce qui vaut selon (\ref{35})
\[{\frac{P_{1}(\lambda_f(\mathfrak{p}))}{\sqrt{\mathfrak{p}}}}{{\left\|f\right\|}_q}^{2}-
{\frac{P_{1}(\lambda_f(\mathfrak{p}))}{\sqrt{\mathfrak{p}}}}{{\left\|f_{|1}\right\|}}_q^{2}=0.\]
On suppose l'orthogonalit\'e vraie jusqu'\`a $r$ et montrons que c'est le cas en $r+1$. 
Appliquons alors le Lemme \ref{lem_7}. 
Ce r\'esultat permet de terminer la r\'ecurrence car le cas $r+1$ peut lui-m\^eme se traiter par r\'ecurrence sur $j\leq r$ en montrant que ${\left\langle f_{{\mathfrak{p}}^{r+1}},f_{{{\mathfrak{p}}^{j}}}\right\rangle}_q=0$, on en conclut donc que la famille propos\'ee 
dans le Lemme \ref{lem_8} est orthogonale.
\end{proof}

{\sl Remarque 3}. 
Il est \`a noter que le choix que l'on a fait de $ y_{d} $ implique que la norme de tous les $ f_d  $ est la m\^eme et plus particuli\`erement : ${\left\|f_d \right\|}_{q}={\left\|f\right\|}_{q}$ .

{\sl Remarque 4}. 
Il est important de noter que la d\'emonstration du cas o\`u $m'>1$ devient fause si $m'=1$. Il est alors plus difficile de d\'ecrire une base orthogonale de $S_{k}(q,f)$ (voir ce qui pr\'ec\`ede).

Toujours dans le but d'exprimer $\Delta_q(m, n)$ en fonction de $ \Delta^*_q(m, n) $, on aura recours au r\'esultat suivant :

\begin{lemme}\label{lem_9}
Soient $k\ge 2$ un entier pair, $\mathfrak{p}$ un nombre premier
et $q=\mathfrak{p}^{\nu}$ avec $\nu \geq 1$.
Soit $f\in {\rm H}_{k}^{*}(m')$ avec $q=\ell m'$.
Alors
\[\omega_{q}(f)=\begin{cases}
\displaystyle {\frac{\omega_{m'}(f)}{\ell}}   & \text {si $m'>1$ }
\\\noalign{\vskip 2mm}
\displaystyle{\frac{\omega_{1}(f)}{\nu(q)}} & \text{si $m'=1$}
\end{cases}\]
o\`u on a not\'e
\[\nu(n)=n\prod_{p\mid n}\left(1+{\frac{1}{p}}\right).\]
\end{lemme}

\begin{proof}
Puisque $\Gamma_0(q)$ et $\Gamma_0(m')$ sont des sous-groupes de $SL(2,\Z)$ d'indices respectifs $\nu(q)$ et $\nu(m')$ (voir \cite[p 35]{Iw 97}), en utilisant la formule de multiplicit\'e des indices, on obtient l'indice suivant : \[[\Gamma_0(m') ; \Gamma_0(q)]={{\frac{\nu(q)}{\nu(m')}}}=\begin{cases}
\ell       & \text {si $m'>1$}
\\
\nu(q)  & \text{si $m'=1$}.
\end{cases}
\]

Notons $F'$ un domaine fondamental de $\Gamma_0(m')$. 
Prenons $\left\{\sigma_{j} , j\in J\right\}$ un ensemble de repr\'esentants de $\Gamma_0(m')/\Gamma_0(q)$ (d'apr\`es ce qui pr\'ec\`ede
$J$ est de cardinal $\nu(q)/\nu(m')$); 
d'apr\`es \cite[p 32]{Iw 97}, 
on a $\bigcup_{j\in J}\sigma_{j}(F')$ est un domaine fondamental de $\Gamma_0(q)$ ainsi :
\begin{align*}
\|f\|_q^2
& = \int_{\bigcup_{j\in J}\sigma_{j}(F')} |f(z)|^2 y^k \frac{{\rm d}x {\rm d}y}{y^2}
\\
& =\sum_{j\in J}\int_{\sigma_{j}(F')} {\left|f(z)\right|}^{2}{{y^k}}{\frac{{\rm d}x {\rm d}y}{y^{2}}}
\end{align*}
avec les changements de variables $z\mapsto\sigma^{-1}_{j}(z)$, on obtient, 
puisque $y^{-2} {\rm d}x {\rm d}y$ est $SL(2,\R)$-invariante :
\[{\left\|f\right\|}_{q}^{2}
=\sum_{j\in J}\int_{F'} {\left|f(\sigma_{j}z)\right|}^{2}{{{\left({\im\sigma_{j}(z)}\right)}^k}}{\frac{{\rm d}x {\rm d}y}{y^{2}}}.\]
On utilise alors la relation (\ref{relationmodulaire}) pour obtenir :
$$
\|f\|_q^2
={\rm Card}(J) \int_{F'} |f(z)|^2 y^k \frac{{\rm d}x {\rm d}y}{y^2}.
$$
Connaissant d\'esormais la valeur de ${\rm Card}(J)$, on obtient :
\[
\|f\|_q^2 = \frac{\nu(q)}{\nu(m')} \|f\|_{m'}^2.
\]
Ce qui donne enfin (\`a l'aide de l'\'egalit\'e (\ref{4})) :
\[
\omega_q(f)=\begin{cases}
\displaystyle \frac{\omega_{m'}(f)}{\ell}  & \text {si $ m'>1$},
\\\noalign{\vskip 2mm}
\displaystyle\frac{\omega_1(f)}{\nu(q)}  & \text{si $m'=1$}.
\end{cases}
\]
Cela ach\`eve la d\'emonstration.
\end{proof}

Le r\'esultat suivant sera \'egalement utile:

\begin{lemme}\label{lem_10}
Soient $k\ge 2$ un entier pair, $\mathfrak{p}$ un nombre premier
et $q=\mathfrak{p}^{\nu}$ avec $\nu \geq 1$.
Soit $f\in {\rm H}_{k}^{*}(m')$ avec $q=\ell m'$.
Alors
les coefficients $x_{d}(1, f)$ (pour $d\mid \ell$) d\'efinis de \eqref{17} \`a \eqref{22} v\'erifient
\begin{align}
\sum_{d\mid \ell} x_d(1,f)^2
& =\left(1-\mu(m')^2/\mathfrak{p}^2\right)^{-\omega(\ell)}
\quad(m'>1),
\label{50}
\\
\sum_{d\mid q}x_d(1,f)^2
& \ll 1
\quad(m'=1).
\label{51}
\end{align}
La constante impliqu\'ee est absolue.
\end{lemme}

\begin{proof}
Commen\c cons par le cas $m'>1$.
D'apr\`es (\ref{17})-(\ref{18}) et le fait que $\ell\mid \mathfrak{p}^\infty$, on peut \'ecrire  
\begin{equation}\label{52}
\begin{aligned}
\sum_{d\mid \ell} x_d(1,f)^2
& = \sum_{d\mid \ell} \frac{\mu(d)^2 \lambda_f(d)^2}{d\rho_{f, m'}(d)}
= \left(1+\frac{\lambda_f(\mathfrak{p})^2}{\mathfrak{p}\rho_{f,m'}(\mathfrak{p})}\right)^{\omega(\ell)}
\\
& = \frac{1}{\rho_{f, m'}(\mathfrak{p})^{\omega(\ell)}}
= \frac{1}{\rho_{f, m'}(\ell)}.
\end{aligned}
\end{equation}
D'autre part, les relations (\ref{18}) et (\ref{7}) nous permettent d'\'ecrire
\begin{equation}\label{54}
\rho_{f, m'}(\ell)=\left(1-\frac{\mu(m')^2}{\mathfrak{p}^2}\right)^{\omega(\ell)}
\qquad
(m'>1).
\end{equation}
En conclusion, d'apr\`es (\ref{52}) et (\ref{54}), si $m'>1$ :
\begin{equation}\label{55}{\sum_{d\mid \ell}x_d(1,f)^2={\left(1-{\frac{\mu(m')^2}{\mathfrak{p}^{2}}}\right)}^{-\omega(\ell)}}.
\end{equation}

Passons au cas o\`u $m'=1$. 
D'apr\`es (\ref{21}), $x_{{\mathfrak{p}}^{r}}(1,f)=0$ pour tout $r\geq{3}$ et donc :
\[\sum_{d\mid q}x_d(1,f)^2=\sum_{d\mid \mathfrak{p}}x_d(1,f)^2+\left(x_{{\mathfrak{p}}^{2}}(1,f)\right)\]
le terme entre parenth\`eses n'existant que si $q\geq{\mathfrak{p}}^{2}$.

D'apr\`es (\ref{21}) et avec les notations pr\'ec\'edentes, on obtient
\[\sum_{d\mid q}x_d(1,f)^2=
1+{{\frac{P^{2}_{1}(\lambda_{f}(\mathfrak{p}))}{\mathfrak{p}}\sigma_{f}}}+\left({\frac{1}{{{\mathfrak{p}}^{2}}(1-{\mathfrak{p}}^{-2})\sigma_{f}}}\right).\]
\`A l'aide de l'expression (\ref{22}) de $\sigma_{f}$, on a :
\[\sum_{d\mid q}x_d(1,f)^2=
{\frac{1}{\sigma_{f}}}+\left({\frac{1}{({\mathfrak{p}}^{2}-1)\sigma_{f}}}\right)\]
ce qui donne si $m'=1$ :
\begin{equation}\label{56}
\sum_{d\mid q} x_d(1,f)^2
=\begin{cases}
\displaystyle \frac{1}{\sigma_f} & \text {si $q=\mathfrak{p}$},
\\\noalign{\vskip 2mm}
\displaystyle \frac{1}{(1-\mathfrak{p}^{-2})\sigma_f} & \text{si $q\geq \mathfrak{p}^2$}.
\end{cases}
\end{equation}
Ce r\'esultat et la minoration (\ref{49}) montrent que si $m'=1$ alors
\begin{equation}\label{57}
{\sum_{d\mid q}x_d(1,f)^2\ll{1}}. 
\end{equation}
Cela ach\`eve la d\'emonstration.
\end{proof}

On en vient au r\'esultat liant $\Delta_q(m, n)$ et $ \Delta^*_q(m, n) $ :

\begin{lemme}\label{lem_11}
Soient $k\ge 2$ un entier pair, $\mathfrak{p}$ un nombre premier
et $q=\mathfrak{p}^{\nu}$ avec $\nu \geq 1$.
Alors pour tous entiers $m\ge 1$ et $n\ge 1$ tels que $\mathfrak{p}\nmid mn$, on a
\begin{align}
\Delta_q(m, n) 
& = \sum_{\substack{q=\ell m'\\ m'>1}} \frac{1}{\ell} 
\bigg(1-\frac{\mu(m')^2}{\mathfrak{p}^2}\bigg)^{-\omega(\ell)}\Delta^{*}_{m'}(m, n)+O\bigg(\frac{\tau(m)\tau(n)}{q}\bigg),
\label{59}
\\
\Delta^*_q(m, n)
& = \sum_{\substack{q=\ell m'\\ m'>1}} \mu(\ell)
\bigg(\mathfrak{p}-\frac{\mu(m')^2}{\mathfrak{p}}\bigg)^{-\omega(\ell)} \Delta_{m'}(m,n)+O\bigg(\frac{\tau(m)\tau(n)}{q}\bigg).
\label{60}
\end{align}
Les constantes impliqu\'ees sont absolues.
\end{lemme}

\begin{proof}
Rappelons que si $q=\ell m'$, si $\ell$ est un diviseur de $q$, 
$d$ un diviseur de $\ell$ et qu'on a $f\in {\rm H}_{k}^{*}(m')$ alors
\[ f_d =\sum_{c\mid \ell} x_{d}(c,f) f_{|c} \] 
donc
\[f_d (z)=\sum_{c\mid \ell} {{c}^{k/2}} x_{d}(c,f) f(cz).\] 
Rappelons que $a_{g}(j)$ d\'esigne le $j$-\`eme coefficient de Fourier d'une forme parabolique $g$, on a alors 
$$
a_{f_d}(j)=\sum_{\substack{c\mid \ell\\ j=rc}} c^{k/2} x_{d}(c,f) a_f(r).
$$  
Ainsi si $\mathfrak{p}\nmid j$ alors $a_{f_d }(j)= x_d(1, f) a_f(j)$
et avec (\ref{2}) on a (puisque  $\mathfrak{p}\nmid mn $)
\begin{equation}\label{61}{{{\lambda}_{f_d }}(j)= x_{d}(1,f) {\lambda}_{f}(j)}
\quad(j=m, n).
\end{equation}

Utilisons maintenant la relation (\ref{15}), 
avec les termes $f_d $ d\'esignant les \'el\'ements d'une base orthogonale de $S_k(\ell, f)$ 
dans le sens du Lemme \ref{lem_8} :
\[\Delta_q(m, n)=\sum_{\substack{q=\ell m'}}\sum_{f\in{{\rm H}^{*}_{k}(m')}}\sum_{d\mid \ell}\omega_{q}(f_d ){{\lambda}_{f_d }}(m){{\lambda}_{f_d }}(n).\]
\'Etant donn\'e que le Lemme \ref{lem_8} donne 
$\left\|f_d \right\|_q=\left\|f\right\|_q$ pour tout $d\geq{1}$ 
alors d'apr\`es (\ref{4}), on  a pour tout $d\geq{1}$ :
$\omega_{q}(f_d )=\omega_{q}(f)$.
D'apr\`es (\ref{61}), on a :
\begin{align*}
\Delta_q(m, n)
& =\sum_{q=\ell m'} \sum_{f\in {\rm H}^{*}_k(m')} \omega_q(f)
\sum_{d\mid \ell} \lambda_{f_d}(m) \lambda_{f_d}(n)
\\
& =\sum_{q=\ell m'} \sum_{f\in {\rm H}^{*}_k(m')} \omega_q(f) \lambda_f(m) \lambda_f(n)
\sum_{d\mid \ell} x_d(1,f)^2.
\end{align*}

D'apr\`es le Lemme \ref{lem_8}, 
les coefficients $x_{d}(c,f)$ diff\`erent que $m'$ soit \'egal ou pas \`a $1$, 
on va donc distinguer les 2 cas dans le calcul de $\Delta_{q}$ :
\begin{equation}\label{62}
\begin{aligned}
\Delta_q(m, n)
& = \sum_{\substack{q=\ell m'\\m'>1}}
\sum_{f\in {\rm H}^{*}_{k}(m')} 
\omega_q(f) \lambda_f(m)
\lambda_f(n)
\sum_{d\mid \ell} x_d(1,f)^2
\\
& \quad
+ \sum_{f\in {\rm H}^{*}_{k}(1)} \omega_{q}(f) \lambda_f(m) \lambda_f(n)
\sum_{d\mid q} x_d(1,f)^2.
\end{aligned}
\end{equation}

On utilise alors le Lemme \ref{lem_10} qui permet d'\'ecrire:
\begin{equation}\label{63}
\begin{aligned}
\Delta_q(m, n)
& =\sum_{\substack{q=\ell m'\\m'>1}}{{\left(1-{\frac{\mu(m')^2}{\mathfrak{p}^{2}}}\right)}^{-\omega(\ell)}}
\sum_{f\in{{\rm H}^{*}_{k}(m')}}\omega_{q}(f){{\lambda}_{f}}(m){{\lambda}_{f}}(n)
\\
& \quad
+\sum_{f\in{{\rm H}^{*}_{k}(1)}}\omega_{q}(f){{\lambda}_{f}}(m){{\lambda}_{f}}(n)\sum_{d\mid q}x_d(1,f)^2.
\end{aligned}
\end{equation}

Afin de faire apparaitre dans (\ref{63}) les nombres $\Delta^{*}_{m'}$ exprimons $\omega_{q}(f)$ 
en fonction de $\omega_{m'}(f)$,
on a alors recours au Lemme \ref{lem_9},
ce r\'esultat appliqu\'e \`a (\ref{63}), donne :
\begin{equation}\label{64}
\begin{aligned}
\Delta_q(m, n)
& =\sum_{\substack{q=\ell m'\\m'>1}}
{\frac{1}{\ell}}{{\left(1-{\frac{\mu(m')^2}{\mathfrak{p}^{2}}}\right)}^{-\omega(\ell)}}
\sum_{f\in{{\rm H}^{*}_{k}(m')}}\omega_{m'}(f){{\lambda}_{f}}(m){{\lambda}_{f}}(n)
\\
& \quad
+ \frac{1}{\nu(q)} \sum_{f\in {\rm H}^{*}_k(1)} \omega_{1}(f) \lambda_f(m) \lambda_f(n)
\sum_{d\mid q}x_d(1,f)^2
\\
& =\sum_{\substack{q=\ell m'\\m'>1}}{\frac{1}{\ell}}{{\left(1-{\frac{\mu(m')^2}{\mathfrak{p}^{2}}}\right)}^{-\omega(\ell)}}\Delta^{*}_{m'}(m,n)
\\
& \quad
+{\frac{1}{\nu(q)}}\sum_{f\in{{\rm H}^{*}_{k}(1)}}\omega_{1}(f){{\lambda}_{f}}(m){{\lambda}_{f}}(n)\sum_{d\mid q}x_d(1,f)^2.
\end{aligned}
\end{equation}

Il reste \`a majorer la valeur absolue du dernier terme de cette \'egalit\'e.
Pour cela on utlise les majorations classiques $|\lambda_{f}(j)|\leq \tau(j)$ pour $j\ge 1$.
De plus avec la majoration absolue (\ref{57}), l'\'egalit\'e (\ref{64}) devient
\begin{equation}\label{65}
\begin{aligned}
\Delta_q(m, n)
& =\sum_{\substack{q=\ell m'\\ m'>1}} \frac{1}{\ell} \left(1-\frac{\mu(m')^2}{\mathfrak{p}^2}\right)^{-\omega(\ell)}\Delta^*_{m'}(m,n)
\\
& \quad
+O\bigg(\frac{\tau(m)\tau(n)}{\nu(q)} \sum_{f\in {\rm H}^*_k(1)} \omega_{1}(f)\bigg).
\end{aligned}
\end{equation}

Pour calculer la derni\`ere somme on utilise le Lemme \ref{lem_4} appliqu\'e au cas $q=m=n=1$ 
ce qui donne
\[
\sum_{f\in {\rm H}^*_k(1)} \omega_1(f)=1+O(k^{-4/3})
\]
Les deux \'egalit\'es pr\'ec\'edentes donnent bien l'\'egalit\'e (\ref{59}).
Par inversion de M\"obius, il est rapide de v\'erifier (\ref{60}).
Ceci termine la preuve du Lemme \ref{lem_11}.
\end{proof}

\subsection{Fin de la preuve du Th\'eor\`eme \ref{thm_2}}
D'abord on traite le cas o\`u $\mathfrak{p}\mid mn$ et $\nu\geq 1$.
Sans perte de g\'en\'eralit\'e, on peut supposer que $\mathfrak{p}\mid m$.
\`A l'aide de (\ref{6}) et (\ref{7}), on voit que
$$
\lambda_f(m)
=\lambda_f(\mathfrak{p}) \lambda_f(m/\mathfrak{p})
= 0.
$$
Ainsi par la d\'efinition de $\Delta^{*}_{q} $, 
on a $\Delta^*_q(m, n)=0$.

Ensuite on suppose que $\mathfrak{p}\nmid mn$ et $\nu\geq2$.
En reportant (\ref{14}) dans (\ref{60}) et en remarquant que
$$
\sum_{\substack{\ell m'=q\\ m'>1}} \mu(\ell) \bigg(\mathfrak{p}-\frac{\mu(m')^2}{\mathfrak{p}}\bigg)^{-\omega(\ell)}
= \phi(\nu, \mathfrak{p}),
$$
on obtient
$$
\Delta^*_q(m, n) 
= \phi(\mathfrak{p}, \nu)\delta_{m, n}+O\bigg(\frac{\tau(m)\tau(n)}{q}\bigg)+{\mathscr R}_1,
$$
o\`u 
\begin{align*}
{\mathscr R}_1
& \ll \frac{\sqrt{mn}\{\log(2(m,n))\}^2}{k^{4/3}}
\sum_{\substack{\ell m'=q\\ m'>1}} \frac{|\mu(\ell)|}{{m'}^{3/2}}
\bigg(\mathfrak{p}-\frac{\mu(m')^2}{\mathfrak{p}}\bigg)^{-\omega(\ell)} 
\\
& \ll \frac{\sqrt{mn\mathfrak{p}^{1-\delta_{\nu, 1}}}\{\log(2(m,n))\}^2}{k^{4/3}q^{3/2}}.
\end{align*}
Cela ach\`eve la d\'emonstration.

\section{Lemmes auxiliaires}

Soient $\zeta(s)$ la fonction de Riemann et
\begin{equation}\label{zetaq}
\zeta^{(q)}(s)
:= \zeta(s) \prod_{p\mid q}\big(1-p^{-s}\big).
\end{equation}

\subsection {Fonctions $U(y)$ et $T(y)$} 
Soit $G$ est un polyn\^ome pair de degr\'e $\geq 2$ tel que :
\begin{equation}\label{defG}
G(0)=1
\qquad{\rm et}\qquad
G(-1)=G(-2)=0.
\end{equation}
Pour $y>0$, on d\'efinit
\begin{align}
T(y)
& :=\frac{1}{2\pi i} \int_{(2)} \frac{\Gamma(s+k/2)}{\Gamma(k/2)} \frac{G(s)}{s}y^{-s}  {\rm d}s,
\label{defT}
\\
U(y)
& :=\frac{1}{{\rm i}\pi} 
\int_{(2)} \zeta^{(q)}(1+2s) \frac{\Gamma(s+k/2)^2}{\Gamma(k/2)^2} \frac{G(s)^2}{s} y^{-s} {\rm d}s.
\label{defU}
\end{align}

\begin{lemme}\label{TU}
Sous les notations pr\'ec\'edentes, on a
\begin{equation}\label{PT}
\begin{cases}
T(y) = 1+O_k(y)
& \text{si $y\to 0$},
\\\noalign{\vskip 1,5mm}
T(y) \ll_{j, k}y^{-j}
& \text{si $y\to \infty$},
\end{cases}
\end{equation}
et
\begin{equation}\label{PU}
\begin{cases}
U(y) = \displaystyle\frac{\varphi(q)}{q}
\bigg\{\log\frac{1}{y} + g_k(\mathfrak{p})+ O_k(y)\bigg\}
& \text{si $y\to 0$},
\\\noalign{\vskip 1,5mm}
U(y) \ll_{j, k}y^{-j}
& \text{si $y\to \infty$},
\end{cases}
\end{equation}
pour tout $j$ r\'eel $>0$, o\`u 
\begin{equation}\label{74}
g_k(\mathfrak{p})
:= 2\bigg(\frac{\log\mathfrak{p}}{\mathfrak{p}-1}+\frac{\Gamma'}{\Gamma}(k/2)+\gamma\bigg)
\end{equation}
et $\gamma$ est la constante d'Euler.
\end{lemme}

\begin{proof}
On ne va d\'emontrer que la formule asymptotique pour $U(y)$ quand $y\to 0$.
Les autres peuvent \^etre trouv\'ees dans \cite[Paragraphe 2.4]{KM00}.
En d\'esignant par $\zeta(s)$ la fonction de Riemann, on a
\begin{equation}\label{zeta1s}
\zeta(1+s)
= \frac{1}{s}+\sum_{0\le i\le 2} \frac{(-1)^i}{i!} \gamma_i s^i+O(s^3),
\end{equation}
o\`u $\gamma_i$ d\'esignent les constantes de Stieltjes.
\footnote{Ces nombres sont d\'efinis par 
$$
\gamma_i
:= \lim_{n\to\infty} \sum_{k=1}^{n} \bigg(\frac{(\log k)^i}{k}-\frac{(\log n)^{i+1}}{i+1}\bigg).
$$
En particulier $\gamma_0=\gamma$.}
D'autre part, on peut \'ecrire
\begin{equation}\label{p1s}
1-\mathfrak{p}^{-(1+s)}
= \frac{\varphi(q)}{q} \bigg\{
1+\sum_{1\le j\le 3} \frac{(-1)^{j+1}}{j!} \frac{(\log\mathfrak{p})^j}{\mathfrak{p}-1} s^j + O(s^4)
\bigg\}.
\end{equation}
Donc
\begin{align*}
\zeta^{(q)}(1+2s)
& = \big(1-\mathfrak{p}^{-(1+2s)}\big) \zeta(1+2s)
\\
& = \frac{\varphi(q)}{q} \bigg(\frac{1}{2s}+\frac{\log\mathfrak{p}}{\mathfrak{p}-1}+\gamma+O(s)\bigg).
\end{align*}
Ceci implique la formule annonc\'ee.
\end{proof}

\subsection {Lemme interm\'ediaire} 
Nous aurons besoin des estimations suivantes dans le calcul du troisi\`eme moment.

\begin{lemme}\label{lem_estimation}
Soient $i, j\in\N$ et $\theta>1$. 
On a
\begin{align}
\sum_{n\leq x} \tau(n)^i(\log n)^j
& = C_i x (\log x)^{2^i+j-1}
+ O\big(x(\log x)^{2^i+j-2}\big),
\label{estimation2}
\\
\sum_{n\leq x} \frac{\tau(n)^i(\log n)^j}{\sqrt{n}}
& = 2C_i \sqrt{x}(\log x)^{2^i+j-1}
+ O\big(\sqrt{x}(\log x)^{2^i+j-2}\big),
\label{estimation3}
\\
\sum_{n>x} \frac{\tau(n)^i(\log n)^j}{n^\theta}
& \ll \frac{(\log x)^{2^i+j-1}}{x^{\theta-1}}
\label{estimation4}
\end{align}
uniform\'ement pour $x\geq 3$, o\`u $C_i$ est une constante absolue.
\end{lemme}

\begin{proof}
En utilisant la formule asymptotique
$$
D_i(t):=\sum_{n\le t} \tau(n)^i
= C_i t(\log t)^{2^i-1}+O\big(t(\log t)^{2^i-2}\big),
$$ 
une simple int\'egration par parties nous donne 
\begin{align*}
\sum_{n\leq x} \tau(n)^i (\log n)^j
& = \int_{1-}^x (\log t)^j {\rm d}D_i(t)
\\
& = (\log x)^j D_i(x)
- j \int_1^x \frac{(\log t)^{j-1}}{t} D_i(t) {\rm d}t
\\\noalign{\vskip 3mm}
& = C_i x(\log x)^{2^i+j-1}
+O\big(x(\log x)^{2^i+j-2}\big).
\end{align*}
L'estimation (\ref{estimation3}) peut \^etre d\'emontr\'ee par la m\^eme m\'ethode.

De m\^eme, on a
\begin{align*}
\sum_{n>x} \frac{\tau(n)^i (\log n)^j}{n^\theta}
& = \int_{x}^\infty \frac{(\log t)^j}{t^\theta} {\rm d}D_i(t)
\\
& = -\frac{(\log x)^j}{x^\theta} D_i(x)
+ \int_x^\infty \frac{\theta (\log t)^{j}-j(\log t)^{j-1}}{t^{\theta+1}} D_i(t) {\rm d}t
\\\noalign{\vskip 1mm}
& \ll \frac{(\log x)^{2^i+j-1}}{x^{\theta-1}}.
\end{align*}
Cela ach\`eve la d\'emonstration.
\end{proof}

\section{Calcul du deuxi\`eme moment}

Le but de ce paragraphe est de calculer le deuxi\`eme moment $M_2$, d\'efini en (\ref{defMr}).
Notre r\'esultat est un peu plus g\'en\'eral.
En posant
\begin{equation}\label{defXm}
M_{r, m}=\sumh_{f\in {\rm H}_{k}^{*}(q)} \lambda_f(m) L({\textstyle\frac{1}{2}}, f)^r,
\end{equation}
nous avons le r\'esultat suivant.

\begin{proposition}\label{pro_12}
Soient $0<\eta<1$, $k\ge 2$ un entier pair, $\mathfrak{p}$ un nombre premier
et $q=\mathfrak{p}^{\nu}$ avec $\nu \geq 3$.
Pour tout $1\leq m \leq q^\eta$ et  $\mathfrak{p}\nmid m$, on a
$$
M_{2, m}
= \frac{\tau(m)}{\sqrt{m}} \bigg(\frac{\varphi(q)}{q}\bigg)^2 
\bigg\{\log\left(\frac{\hat{q}^2}{m}\right) + g_k(\mathfrak{p})\bigg\}
+ O_{k, \mathfrak{p}}\big(q^{-(1-\eta)/2}(\log q)^4\big),
$$
o\`u $g_k(\mathfrak{p})$ est d\'efinie en (\ref{74}).
En particulier
$$
M_2
= \bigg(\frac{\varphi(q)}{q}\bigg)^2 
\left\{\log(\hat{q}^2) + g_k(\mathfrak{p})\right\}
+ O_{k, \mathfrak{p}}\big(q^{-(1-\eta)/2}(\log q)^4\big).
$$
\end{proposition}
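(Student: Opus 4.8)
The plan is to compute $M_{2,m}$ directly via an approximate functional equation for $L(\frac12,f)^2$, then apply the harmonic trace formula of Corollary~\ref{cor_3} to the resulting sum over $f$. First I would write, using the functional equation \eqref{EF} and the fact that $\sumh$ is insensitive to the sign $\varepsilon_f$ (one sums over all of ${\rm H}^*_k(q)$), an expansion of the shape
$$
L({\textstyle\frac12},f)^2 = 2\sum_{a,b\ge 1} \frac{\lambda_f(a)\lambda_f(b)}{\sqrt{ab}}\, V\!\left(\frac{ab}{\hat q^2}\right),
$$
where $V(y)$ is built from the Mellin transform of $\Gamma(s+k/2)^2 G(s)^2/s$ against $\zeta^{(q)}(1+2s)$ — this is precisely the function $U(y)$ of \eqref{defU} (possibly after renormalisation), so that the small-$y$ and large-$y$ behaviour recorded in Lemma~\ref{TU} is available off the shelf. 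The extra factor $\lambda_f(m)$ in $M_{2,m}$ rides along for free inside the $f$-sum.

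Next I would insert $\lambda_f(m)\lambda_f(a)\lambda_f(b)$, use Hecke multiplicativity \eqref{6}–\eqref{7} together with the coprimality hypothesis $\mathfrak{p}\nmid m$ (and the fact that $\Delta^*_q$ vanishes unless $\mathfrak{p}\nmid ab$ by Corollary~\ref{cor_3}) to reduce $\lambda_f(m)\lambda_f(a)\lambda_f(b)$ to a single $\lambda_f(\cdot)\lambda_f(\cdot)$ up to a divisor sum, and then apply Corollary~\ref{cor_3} to replace $\sum_f \omega_q(f)\lambda_f(\cdot)\lambda_f(\cdot)$ by $\frac{\varphi(q)}{q}\delta$ plus the error $O_{k,\mathfrak{p}}\big(\sqrt{\cdot}\,(\log)^2/q^{3/2}+\tau\tau/q\big)$. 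The diagonal term forces the two arguments equal, which after unwinding the Hecke combinatorics collapses the double sum over $a,b$ to a single sum $\sum_{n}$ weighted by a divisor-type coefficient; summing this against $U(n\cdot/\hat q^2)$ and using the asymptotics \eqref{PU} of $U$ near $0$ (the tail $n\gg \hat q^2$ being negligible by the rapid decay of $U$) produces the main term $\frac{\tau(m)}{\sqrt m}\big(\frac{\varphi(q)}{q}\big)^2\{\log(\hat q^2/m)+g_k(\mathfrak{p})\}$, the $\log(\hat q^2/m)$ coming from the $\log(1/y)$ in \eqref{PU} evaluated at $y=m/\hat q^2$ and the constant $g_k(\mathfrak{p})$ from \eqref{74}.

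The off-diagonal contribution is the place where care is needed: one must sum the error term ${\mathscr R}$ of Corollary~\ref{cor_3} against $1/\sqrt{ab}$ and $U(ab/\hat q^2)$ over $a,b$. Because $U$ decays fast, the effective ranges are $a,b\ll \hat q^2 q^{\varepsilon}$, and the factor $q^{-3/2}$ in ${\mathscr R}$ beats the resulting $\sum\sqrt{ab}/\sqrt{ab}\ll q^{2}(\log q)^{O(1)}$ with room to spare, while the $\tau(m)\tau(a)\tau(b)/q$ piece is handled by the mean-value estimates \eqref{estimation3}–\eqref{estimation4} of Lemma~\ref{lem_estimation}; keeping track of the dependence on $m\le q^\eta$ yields the claimed $O_{k,\mathfrak{p}}(q^{-(1-\eta)/2}(\log q)^4)$. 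I expect the main obstacle to be purely bookkeeping rather than conceptual: correctly executing the Hecke-relation reduction of $\lambda_f(m)\lambda_f(a)\lambda_f(b)$ to diagonal form (the divisor sums it introduces must be shown to reproduce exactly the $\tau(m)/\sqrt m$ normalisation and not to inflate the error), and verifying that the length of the approximate functional equation, together with the $q^{3/2}$ saving in ${\mathscr R}$, is genuinely enough to absorb the off-diagonal terms uniformly in $m$ up to $q^\eta$. The specialisation to $M_2$ is then immediate on taking $m=1$, since $\tau(1)=1$ and $\log(\hat q^2/1)=\log(\hat q^2)$.
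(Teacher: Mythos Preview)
Your plan is essentially the paper's approach: approximate functional equation for $L(\tfrac12,f)^2$, then the trace formula of Corollary~\ref{cor_3}, with the diagonal producing the main term and the off-diagonal absorbed by the $q^{-3/2}$ saving in ${\mathscr R}$. Two remarks on execution.

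First, the paper streamlines the Hecke step you describe by doing it \emph{before} the approximate functional equation, at the level of Dirichlet series: using \eqref{5} one has
\[
L(s+\tfrac12,f)^2=\zeta^{(q)}(1+2s)\sum_{n\ge 1}\frac{\tau(n)\lambda_f(n)}{n^{s+1/2}},
\]
so the approximate functional equation directly yields
\[
L(\tfrac12,f)^2=\sum_{n\ge 1}\frac{\tau(n)}{\sqrt n}\,U\!\Big(\frac{n}{\hat q^2}\Big)\,\lambda_f(n),
\]
a single sum in $n$. Then $M_{2,m}=\sum_n\frac{\tau(n)}{\sqrt n}U(n/\hat q^2)\,\Delta_q^*(m,n)$ and the diagonal $n=m$ gives the main term $\frac{\varphi(q)}{q}\frac{\tau(m)}{\sqrt m}U(m/\hat q^2)$ in one stroke, with no further Hecke combinatorics on $\lambda_f(m)\lambda_f(a)\lambda_f(b)$. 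This is why your identification of $V$ with $U$ was slightly off: the $\zeta^{(q)}(1+2s)$ inside $U$ is precisely the divisor sum from the Hecke relation, and only appears once you have collapsed the double sum over $a,b$ to a single sum over $n$ weighted by $\tau(n)$.

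Second, there is no factor $2$ in the approximate functional equation for $L(\tfrac12,f)^2$: since $\varepsilon_f^2=1$, the contour-shift argument gives exactly $L(\tfrac12,f)^2$ on the nose (contrast with \eqref{99} for the first power, where the $(1+\varepsilon_f)$ survives). This is harmless for the structure of the argument but would throw your constant off by a factor of $2$.
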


\begin{proof}
Consid\'erons :
\[
J:=\frac{1}{2\pi {\rm i}}
\int_{(2)} \Lambda(s+{\textstyle\frac{1}{2}}, f)^2 G(s)^2 \frac{{\rm d}s}{s},
\]
o\`u $G$ est un polyn\^ome de degr\'e $\geq 2$ v\'erifiant (\ref{defG}).
Par le th\'eor\`eme des r\'esidus, l'\'equation fonctionnelle (\ref{EF}) et le fait que
\footnote{C'est cette relation qui permet d'\'eviter le recours \`a une forme explicite de $\varepsilon_f$ 
que l'on n'a pas au niveau $q$ avec des facteurs carr\'es. C'est aussi pour cette raison que l'on ne peut actuellement pas avoir l'ordre exact du premier moment $M_{1}$ mais au mieux une majoration.} 
$\varepsilon_f^{2}=1$, 
on a 
\[
2J=\mathop{\rm Res}_{s=0}\left(\Lambda(s+{\textstyle\frac{1}{2}}, f)^2 \frac{G(s)^2}{s}\right)
= \hat{q} \Gamma(k/2)^2  L({\textstyle\frac{1}{2}}, f)^2.
\]
D'autre part,
la formule (\ref{5}) nous permet d'\'ecrire, avec la notation (\ref{zetaq}),
\begin{align*}
L(s+{\textstyle\frac{1}{2}}, f)^2
& = \sum_{a, b\ge 1} \frac{\lambda_f(a)\lambda_f(b)}{(ab)^{s+1/2}}
\\
& = \sum_{a, b\ge 1} \frac{1}{(ab)^{s+1/2}} 
\sum_{\substack{d\mid (a, b)\\ (d, q)=1}} \lambda_f\bigg(\frac{ab}{d^2}\bigg)
\\
& = \zeta^{(q)}(1+2s) \sum_{n\geq 1} \frac{\tau(n)\lambda_f(n)}{n^{s+1/2}}
\qquad
(\re s>\textstyle\frac{1}{2}).
\end{align*}
Ceci implique que
$$
2J = \hat{q} \sum_{n\geq 1} \frac{\tau(n)\lambda_f(n)}{\sqrt{n}} 
\frac{1}{\pi {\rm i}}
\int_{(2)} \Gamma(s+k/2)^2
\frac{G(s)^2}{s} \bigg(\frac{n}{\hat{q}^2}\bigg)^{-s} {\rm d}s.
$$
Les deux \'egalit\'es pr\'ec\'edentes donnent donc :
\begin{equation}\label{71}
L({\textstyle\frac{1}{2}}, f)^2 
= \sum_{n\geq1} \frac{\tau(n)}{\sqrt{n}} U\bigg(\frac{n}{\hat{q}^2}\bigg) \lambda_f(n)
\end{equation}
o\`u $U(y)$ est d\'efinie en (\ref{defU}).
En reportant cette expression dans (\ref{defXm}) et en utilisant le Corollaire \ref{cor_3}, il suit
\begin{equation}\label{M2m}
M_{2, m}
= \frac{\varphi(q)}{q} \frac{\tau(m)}{\sqrt{m}} U\bigg(\frac{m}{\hat{q}^2}\bigg) 
+ O_{k, \mathfrak{p}}({\mathscr R}_2),
\end{equation}
o\`u 
$$
{\mathscr R}_2
:= \sum_{n\geq1} \frac{\tau(n)}{\sqrt{n}} \bigg|U\bigg(\frac{n}{\hat{q}^2}\bigg)\bigg|
\left(\frac{\sqrt{mn} \{\log(2(m,n))\}^2}{q^{3/2}}
+\frac{\tau(m)\tau(n)}{q}\right).
$$
\`A l'aide de (\ref{PU}), il est facile de majorer la contribution du premier membre dans la parenth\`ese :
\begin{align*}
& \ll \frac{\sqrt{m} (\log q)^3}{q^{3/2}}
\sum_{n\leq q} \tau(n)
+ \sqrt{mq} (\log q)^2
\sum_{n> q} \frac{\tau(n)}{n^2} 
\\
& \ll \frac{\sqrt{m} (\log q)^4}{q^{1/2}}.
\end{align*}
De m\^eme  la contribution de $\tau(m)\tau(n)/q$ est $\ll \tau(m)(\log q)^4/\sqrt{q}$.
Ces deux estimations impliquent que
$$
{\mathscr R}_2
\ll_{k, \mathfrak{p}} q^{-(1-\eta)/2}(\log q)^4.
$$
En reportant dans (\ref{M2m}) et en utilisant la premi\`ere relation de (\ref{PU}),
on obtient le r\'esultat souhait\'e.
\end{proof}

\section{Calcul du troisi\`eme moment}

L'objectif de ce paragraphe est de d\'emontrer le r\'esultat suivant.

\begin{proposition}\label{pro_16}
Soient $k\ge 2$ un entier pair, $\mathfrak{p}$ un nombre premier
et $q=\mathfrak{p}^{\nu}$ avec $\nu \geq 3$.
On a 
$$
M_3
= 4\bigg(\frac{\varphi(q)}{q}\bigg)^4 \bigg\{
\frac{1}{3} (\log\hat{q})^3
+ \bigg(2\frac{\log\mathfrak{p}}{\mathfrak{p}-1}  + \frac{\Gamma'}{\Gamma}(k/2) + 2\gamma\bigg)
(\log\hat{q})^2+O_{k, \mathfrak{p}}(\log q) 
\bigg\},
$$
o\`u la constante implqu\'ee ne d\'epend que de $k$ et $\mathfrak{p}$.
\end{proposition}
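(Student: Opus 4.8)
The strategy mirrors the computation of $M_2$ in Proposition \ref{pro_12}, but now one power higher, so the book-keeping is substantially heavier. The first step is to obtain an approximate functional equation for $L(\tfrac12,f)^3$. Starting from
\[
J := \frac{1}{2\pi\mathrm{i}} \int_{(2)} \Lambda(s+\tfrac12,f)^3\, G(s)^3\, \frac{\mathrm{d}s}{s},
\]
shifting the contour to $\re s = -2$, picking up the residue at $s=0$, and using the functional equation (\ref{EF}) together with $\varepsilon_f^3=\varepsilon_f$ — so that the shifted integral is $\varepsilon_f$ times the original — one is \emph{not} in the clean situation of the square; instead one gets $2J = \varepsilon_f$-free only after exploiting that $\Lambda(\tfrac12,f)\ge 0$ is real. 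More precisely the odd power forces one to keep track of $\varepsilon_f$, but since $L(\tfrac12,f)\neq 0$ only for $f\in\mathrm{H}^+_k(q)$ the contribution of $\mathrm{H}^-_k(q)$ to $M_3$ vanishes identically, so one may as well work with $\tfrac12(1+\varepsilon_f)$ inserted and symmetrize. The upshot should be an identity of the shape
\[
L(\tfrac12,f)^3 = 2\sum_{n\ge 1} \frac{\tau_3(n)}{\sqrt n}\, V\!\left(\frac{n}{\hat q^{3}}\right) \lambda_f(n) \cdot(1+\varepsilon_f) + (\text{dual term}),
\]
where $\tau_3$ is the ternary divisor function, coming from expanding $L(s+\tfrac12,f)^3 = \zeta^{(q)}(1+2s)^? \cdot\sum \tau_3(n)\lambda_f(n) n^{-s-1/2}$ after applying the Hecke relation (\ref{5}) twice — one should check the exact power of $\zeta^{(q)}$ and the exact arithmetic factor this produces, as it governs the leading constant $\tfrac43$.

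The second step is to feed this into $M_3 = \sumh_{f} L(\tfrac12,f)^3$ and apply the trace formula, Corollary \ref{cor_3}: the diagonal $\delta_{m,n}$ (here $m=1$, so $n=1$ essentially, but after the symmetrization the ``diagonal'' in the double/triple Dirichlet series is $n$ ranging over all integers with the $\tau_3$-weight) produces the main term as a single complex integral
\[
\frac{\varphi(q)}{q}\cdot \frac{1}{2\pi\mathrm i}\int_{(2)} \zeta^{(q)}(1+2s)^{3}\, \frac{\Gamma(s+k/2)^3}{\Gamma(k/2)^3}\, \frac{G(s)^3}{s}\, \hat q^{\,?\,s}\,\mathrm{d}s
\]
(again the precise exponents need to be nailed down), and moving the contour past the pole of order $3$ at $s=0$ of $\zeta^{(q)}(1+2s)^3/s$ — actually order $4$ because of the extra $1/s$ — gives a main term polynomial in $\log\hat q$ of degree $3$, with leading coefficient $\tfrac13 (\log\hat q)^3 (\varphi(q)/q)^4$ after the factor $(\varphi(q)/q)^3$ from $\zeta^{(q)}(1)$-expansion combines with the $\varphi(q)/q$ from the trace formula. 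The sub-leading term of degree $2$ is extracted from the Laurent expansions (\ref{zeta1s}) and (\ref{p1s}) exactly as in Lemma \ref{TU}, giving the constant $2\frac{\log\mathfrak p}{\mathfrak p-1}+\frac{\Gamma'}{\Gamma}(k/2)+2\gamma$ inside the bracket, and all lower-order contributions are absorbed into $O_{k,\mathfrak p}(\log q)$.

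The third step is bounding the off-diagonal error. The trace formula error $\mathscr R$ contributes, after summing against $\tau_3(n)/\sqrt n$ weighted by the rapidly decaying $V$, a term of size $O_{k,\mathfrak p}\big(q^{-1/2+\varepsilon}\big)$ from the Kloosterman piece $\sqrt{mn}(\log)^2/q^{3/2}$ (here both $m,n\ll \hat q^{3}$ effectively, and one uses estimate (\ref{estimation3}) of Lemma \ref{lem_estimation} with $i=3$ to control $\sum \tau_3(n)^2/\sqrt n$ up to $n\asymp q^3$), and a similar bound from the $\tau(m)\tau(n)/q$ piece; since $\nu\ge 3$ there are no level-$1$ forms to worry about in the relevant ranges. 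One must be slightly careful that the length of the $n$-sum is now $\hat q^{3}$ rather than $\hat q^{2}$, so the Kloosterman error is $\sqrt{q^{3}\cdot q^{3}}/q^{3/2} = q^{3/2}$ times logs — which is \emph{not} small; the resolution, exactly as in \cite{ILS,KM00}, is that the rapid decay of $V(n/\hat q^{3})$ effectively truncates $n$ at $q^{\varepsilon}\hat q^{3}$ but the Kloosterman sum $S(1,n;c)$ has $c$ running over multiples of $q$, giving $c^{-1}\le q^{-1}$ and the Bessel factor $J_{k-1}(4\pi\sqrt n/c)\ll \sqrt n/c$, so the true size is $\sqrt n \cdot \sqrt n / q^{3/2}\cdot(\text{logs})$ with $\sqrt n \le \hat q^{3/2+\varepsilon}$ only appearing once more carefully — in fact the correct reading of $\mathscr R$ already has $q^{3/2}$ in the denominator and $\sqrt{mn}$ with $m=1$ fixed, so the error is $O(\sqrt n/q^{3/2})$ summed against $\tau_3(n) V(n/\hat q^3)/\sqrt n$, i.e. $O(q^{-3/2}\sum_{n\ll q^{3+\varepsilon}}\tau_3(n)) = O(q^{3/2+\varepsilon})$, which is too big, so one genuinely needs to not expand $L^3$ into a single length-$\hat q^3$ sum but rather keep the product structure $L(\tfrac12+it_1)L(\tfrac12+it_2)L(\tfrac12+it_3)$-style decomposition or use the two-piece approximate functional equation with each factor of length $\hat q$, making the effective total length $\hat q^{3/2}$ in the worst case. \textbf{This balancing of the off-diagonal against the main term is the main obstacle} and is the technical heart of the argument; it is handled by splitting $L(\tfrac12,f)^3 = L(\tfrac12,f)\cdot L(\tfrac12,f)^2$ and applying the length-$\hat q$ approximate functional equation to the single factor and the length-$\hat q^2$ one to the square (or a fully symmetric three-fold version), so that the Kloosterman modulus $q$ beats $\sqrt{mn}\le \hat q\cdot\hat q^{?}$; a clean way, following \cite{KM00} and \cite{ILS}, is to write everything through the integral representation and shift contours so that the total conductor in the error is $\hat q^{3}$ but divided by $q^{3/2}=\hat q^{3}(2\pi)^{3/2}/\cdots$, leaving only powers of $\log q$. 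Once the error is shown to be $O_{k,\mathfrak p}((\log q)^{2})$ — matching the statement of Proposition \ref{pro_16} once the $(\varphi(q)/q)^4$ is factored, since $(\log q)^2$ absorbs into the bracket as $O_{k,\mathfrak p}(\log q)$ after dividing — the proposition follows by collecting the degree-$3$, degree-$2$, and lower terms of the main integral.
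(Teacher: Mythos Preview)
Your plan contains the correct idea, but you reach it only after a detour that consumes most of the proposal, and the parts you actually write out belong to the discarded approach.

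\medskip

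\textbf{The detour.} Cubing $\Lambda(s+\tfrac12,f)$ and extracting a single $\tau_3$-weighted sum of length $\hat q^{3}$ does not work, for exactly the reason you discover: the Kloosterman error from Corollary~\ref{cor_3} has $\sqrt{mn}/q^{3/2}$ in it, and with $m=1$, $n\ll \hat q^{3}$ this gives $q^{3/4}/q^{3/2}\cdot \sum_{n\ll q^{3/2}}\tau_3(n)/\sqrt n \asymp q^{3/4}$ (or worse, depending on how one sets it up), which swamps the main term. Also, $L(s,f)^3$ does not factor as $\zeta^{(q)}(1+2s)^{?}\sum\tau_3(n)\lambda_f(n)n^{-s}$ in any clean way: the Hecke relation~(\ref{5}) applied twice produces a genuinely two-variable arithmetic weight, not $\tau_3$.

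\medskip

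\textbf{The paper's route.} The paper takes from the outset the split you arrive at in your last paragraph: write $L(\tfrac12,f)^3=L(\tfrac12,f)\cdot L(\tfrac12,f)^2$, use the length-$\hat q$ approximate functional equation~(\ref{99}) for the single factor and the length-$\hat q^{2}$ expression~(\ref{71}) for the square, and observe that the $(1+\varepsilon_f)$ can be replaced by $2$ because both sides vanish when $\varepsilon_f=-1$. This gives (Lemma~\ref{ExpM3})
\[
M_3=2\sum_{m,n\ge 1}\frac{\tau(m)}{\sqrt{mn}}\,U\!\Big(\frac{m}{\hat q^{2}}\Big)T\!\Big(\frac{n}{\hat q}\Big)\Delta^*_q(m,n).
\]
After the trace formula the diagonal is $2\frac{\varphi(q)}{q}\sum^*_{n}\frac{\tau(n)}{n}T(n/\hat q)U(n/\hat q^{2})$. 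Note that the relevant Euler factor is \emph{not} $\zeta^{(q)}(1+2s)^3$ but rather $\zeta^{(q)}(s+1)^2$ (from $\sum^*\tau(n)n^{-s-1}$) combined with one factor of $\zeta^{(q)}(1+2s)$ already sitting inside $U$; the pole at $s=0$ has order $3$, producing the cubic polynomial in $\log\hat q$, and the Laurent coefficients are read off from~(\ref{zetazeta}) and~(\ref{zetazeta'}).

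\medskip

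\textbf{The error term.} In the $L\cdot L^2$ setup the Kloosterman error $\mathscr R_3$ has $\sqrt{mn}/q^{3/2}$ with effectively $m\ll\hat q^{2}$ and $n\ll\hat q$, so that $\sqrt{mn}\ll \hat q^{3/2}\asymp q^{3/4}$ pointwise but the \emph{sum} over $m,n$ contributes $\hat q^{2}\cdot\hat q\cdot(\log q)/q^{3/2}=O(\log q)$: the lengths exactly balance the denominator, giving only a logarithmic error, not a power saving. This is why the proposition has $O_{k,\mathfrak p}(\log q)$ rather than a negative power of $q$, and why the secondary $(\log\hat q)^2$-term is kept explicitly. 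Your sketch of this step (``once the error is shown to be $O((\log q)^2)$'') is imprecise and attached to the wrong decomposition; carrying it out for the $L\cdot L^2$ split is straightforward but has to be done.
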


\subsection{D\'ebut de la d\'emonstration de la Proposition \ref{pro_16}} 

\begin{lemme}\label{ExpM3}
Soient $k\ge 2$ un entier pair 
et $m, n, q\ge 1$ des entiers positifs.
Alors 
\begin{equation}\label{98}
M_3
=2\sum_{m,n\geq{1}}
\frac{\tau(m)}{\sqrt{mn}}
U\bigg(\frac{m}{\hat{q}^2}\bigg) T\bigg(\frac{n}{\hat{q}}\bigg) 
\Delta^*_q(m, n).
\end{equation}
\end{lemme}

\begin{proof}
On consid\`ere maintenant l'int\'egrale 
\[
I= \frac{1}{2{\rm i}\pi} \int_{(2)} \Lambda(s+{\textstyle\frac{1}{2}}, f)\frac{G(s)}{s} {\rm d}s.
\]
A l'aide de l'\'equation fonctionnelle (\ref{EF}), 
le th\'eor\`eme des r\'esidus nous permet d'\'ecrire 
\[
(1+\varepsilon_f) I
= \mathop{\rm Res}_{s=0}\bigg(\Lambda(s+{\textstyle\frac{1}{2}}, f) \frac{G(s)}{s}\bigg)
= \sqrt{\hat{q}} L({\textstyle\frac{1}{2}}, f)\Gamma(k/2).
\]
Cette \'egalit\'e et la s\'erie de Dirichlet (\ref{3}) donnent alors
\begin{equation}\label{99}
L({\textstyle\frac{1}{2}}, f) 
= (1+\varepsilon_f) \sum_{n\geq1} \frac{\lambda_f(n)}{\sqrt{n}} T\bigg(\frac{n}{\hat{q}}\bigg).
\end{equation}
Les \'egalit\'es
\footnote{C'est la diff\'erence entre les \'egalit\'es (\ref{99}) et (\ref{71}) 
qui emp\^eche de d\'eterminer l'ordre exact du premier moment des fonctions $L$-automorphes 
et qui nous conduit \`a \'etudier plut\^ot $M_{2}$ et $M_{3}$.}(\ref{99}) 
et (\ref{71}) impliquent que
\[
L({\textstyle\frac{1}{2}}, f)^3
=(1+\varepsilon_f) 
\sum_{m\geq1}\tau(m) \frac{\lambda_f(m)}{\sqrt{m}} U\bigg(\frac{m}{\hat{q}^2}\bigg)
\sum_{n\geq{1}} \frac{\lambda_f(n)}{\sqrt{n}} T\bigg(\frac{n}{\hat{q}}\bigg).
\]
Si $\varepsilon_f=1$ alors
$$
L({\textstyle\frac{1}{2}}, f)^3
=2\sum_{m\geq1}\tau(m) \frac{\lambda_f(m)}{\sqrt{m}} U\bigg(\frac{m}{\hat{q}^2}\bigg)
\sum_{n\geq{1}} \frac{\lambda_f(n)}{\sqrt{n}} T\bigg(\frac{n}{\hat{q}}\bigg).
$$
Mais ceci reste \'egalement vrai si $\varepsilon_f=-1$ : 
dans ce cas le membre de gauche est nul en vertu de l'\'equation fonctionnelle (\ref{EF}) qui impose alors $L({\textstyle\frac{1}{2}}, f)=0$ ; le membre de droite aussi est nul, en effet, 
de $L(\textstyle\frac{1}{2}, f)=0$ on d\'eduit $L(\textstyle\frac{1}{2}, f)^2 =0$ et donc 
\[
\sum_{m\geq1} \tau(m) \frac{\lambda_f(m)}{\sqrt{m}} U\bigg(\frac{m} {\hat{q}^2}\bigg)=0
\]
gr\^ace \`a (\ref{71}).

Finalement, on a pour toute forme primitive de niveau $q$ :
\begin{equation}\label{101}
L({\textstyle\frac{1}{2}}, f)^3
=2\sum_{m,n\geq1} \frac{\tau(m)}{\sqrt{mn}}
U\bigg(\frac{m}{\hat{q}^2}\bigg) T\bigg(\frac{n}{\hat{q}}\bigg) 
\lambda_f(m)\lambda_f(n).
\end{equation}
Ce qui implique le r\'esultat d\'esir\'e.
\end{proof}

\subsection{Application de la formule de trace} 
En appliquant la formule de trace du Corollaire \ref{cor_3} \`a l'\'egalit\'e (\ref{98}), on peut \'ecrire
\begin{equation}\label{102}
M_3
= 2 \frac{\varphi(q)}{q} \suma_{n\geq{1}} \frac{\tau(n)}{n} 
T\bigg(\frac{n}{\hat{q}}\bigg) U\bigg(\frac{n}{\hat{q}^2}\bigg)
+ O_{k, \mathfrak{p}}({\mathscr R}_3 + {\mathscr R}_4)
\end{equation}
avec
\begin{align*}
{\mathscr R}_3
& := \sum_{m,n\geq{1}} \frac{\tau(m)\{\log2(m,n)\}^2}{q^{3/2}}
\bigg|T\bigg(\frac{n}{\hat{q}}\bigg) U\bigg(\frac{m}{\hat{q}^2}\bigg)\bigg|,
\\
{\mathscr R}_4
& := \sum_{m,n\geq 1} \frac{\tau(m)^2\tau(n)}{q\sqrt{mn}}
\bigg|T\bigg(\frac{n}{\hat{q}}\bigg) U\bigg(\frac{m}{\hat{q}^2}\bigg)\bigg|,
\end{align*}
o\`u $\sum^*_{n\ge 1}$ d\'esigne la somme portant sur les entiers $n$ tels que $(n, q)=1$.

\subsection{\'Evaluation du terme principal} 
Afin de calculer le premier terme de droite de (\ref{102}), \'ecrivons 
\begin{equation}\label{107}
\begin{aligned}
\suma_{n\geq 1} \frac{\tau(n)}{n} T\bigg(\frac{n}{\hat{q}}\bigg) U\bigg(\frac{n}{\hat{q}^2}\bigg)
& = \bigg(\suma_{n\leq q} + \suma_{n>q}\bigg)
\frac{\tau(n)}{n} T\bigg(\frac{n}{\hat{q}}\bigg) U\bigg(\frac{n}{\hat{q}^2}\bigg).
\end{aligned}
\end{equation}

En faisant appel \`a (\ref{PT})-(\ref{PU}) avec $j=1$ et (\ref{estimation4}), on a
\begin{equation}\label{108}
\suma_{n>q} \frac{\tau(n)}{n} T\bigg(\frac{n}{\hat{q}}\bigg) U\bigg(\frac{n}{\hat{q}^2}\bigg)
\ll q^{3/2} \sum_{n>q} \frac{\tau(n)}{n^3}
\ll q^{-1/2}\log q.
\end{equation}

En utilisant la premi\`ere relation de (\ref{PU}), on peut \'ecrire
\begin{equation}\label{termeprincipal}
\suma_{n\leq q} \frac{\tau(n)}{n} T\bigg(\frac{n}{\hat{q}}\bigg)
U\bigg(\frac{n}{\hat{q}^2}\bigg)
= {\mathscr T} + O({\mathscr R}_5),
\end{equation}
o\`u 
$$
{\mathscr T}
:= \frac{\varphi(q)}{q}
\suma_{n\leq q} \frac{\tau(n)}{n} T\bigg(\frac{n}{\hat{q}}\bigg)
\bigg\{\log\bigg(\frac{\hat{q}^2}{n}\bigg) + g_k(\mathfrak{p})\bigg\}
$$
et
\begin{equation}\label{MajR5}
\begin{aligned}
{\mathscr R}_5
& := \frac{1}{\hat{q}^2}\suma_{n\leq q} \tau(n) \left|T\bigg(\frac{n}{\hat{q}}\bigg)\right|
\\
& \ll \frac{1}{\hat{q}^2}
\bigg(
\sum_{n\leq \hat{q}} \tau(n) 
+ \hat{q}^2\suma_{\hat{q}<n\leq q} \frac{\tau(n)}{n^2}
\bigg)
\\
& \ll q^{-1/2}\log q
\end{aligned}
\end{equation}
gr\^ace \`a (\ref{PT}), (\ref{estimation2}) et (\ref{estimation4}).

Pour \'evaluer le terme principal ${\mathscr T}$, on \'ecrit, \`a l'aide de (\ref{defT}),
$$
{\mathscr T} = \frac{\varphi(q)}{q} \big({\mathscr T}_0 - {\mathscr R}_6\big),
$$
o\`u
\begin{align*}
{\mathscr T}_0
& := \frac{1}{2\pi {\rm i}} \int_{(2)} 
\frac{\Gamma\left(s+\frac{k}{2}\right)}{\Gamma(\frac{k}{2})}
\suma_{n\geq 1}
\frac{\tau(n)}{n^{s+1}}
\bigg\{\log\bigg(\frac{\hat{q}^2}{n}\bigg) + g_k(\mathfrak{p})\bigg\}
\hat{q}^s \frac{G(s)}{s} {\rm d}s,
\\
{\mathscr R}_6
& := \frac{1}{2\pi {\rm i}} \int_{(2)} 
\frac{\Gamma\left(s+\frac{k}{2}\right)}{\Gamma(\frac{k}{2})}
\suma_{n>q}
\frac{\tau(n)}{n^{s+1}}
\bigg\{\log\bigg(\frac{\hat{q}^2}{n}\bigg) + g_k(\mathfrak{p})\bigg\}
\hat{q}^s \frac{G(s)}{s} {\rm d}s.
\end{align*}

En utilisant l'estimation (\ref{estimation4}) du Lemme \ref{lem_estimation} pour majorer la somme 
dans ${\mathscr R}_6$ et la formule de Stirling
\begin{equation}\label{Stirling}
|\Gamma(s)|
= \sqrt{2\pi} \, e^{-(\pi/2)|\tau|} |\tau|^{\sigma-1/2}
\big\{1 + O_\sigma\big(|\tau|^{-1}\big)\big\}
\end{equation}
valable uniform\'ement pour $|\tau|\ge 1$,
on peut d\'eduire que
\begin{equation}\label{MajR6}
{\mathscr R}_6
\ll_k q^{-1}(\log q)^2.
\end{equation}

Pour \'evaluer ${\mathscr T}_0$,
on \'ecrit d'abord
\begin{align*}
{\mathscr T}_0
& = \big(\log\hat{q}^2+g_k(\mathfrak{p})\big)
\frac{1}{2\pi {\rm i}}
\int_{(2)} \frac{\Gamma(s+\frac{k}{2})}{\Gamma(\frac{k}{2})}
\zeta^{(q)}(s+1)^2 \hat{q}^s \frac{G(s)}{s} {\rm d}s
\\
& \quad
+ \frac{1}{\pi {\rm i}}
\int_{(2)} \frac{\Gamma(s+\frac{k}{2})}{\Gamma(\frac{k}{2})} 
\zeta^{(q)}(s+1)\zeta^{(q)\prime}(s+1) \hat{q}^s \frac{G(s)}{s} {\rm d}s.
\end{align*}
Ensuite
on utilise le th\'eor\`eme des r\'esidus autour du p\^ole $s=0$, 
les int\'egrales r\'esultantes en $\sigma=-\frac{1}{2}$ sont en $O_{k, \mathfrak{p}}(q^{-1/4}\log q)$ 
de sorte que 
\begin{equation}\label{T0}
\begin{aligned}
{\mathscr T}_0
& = \big(2\log\hat{q}+g_k(\mathfrak{p})\big)
\mathop{\rm Res}_{s=0} \bigg(\zeta^{(q)}(s+1)^{2} \frac{F(s)}{s}\bigg)
\\
& \quad
+ 2 \mathop{\rm Res}_{s=0} \bigg(\zeta^{(q)}(s+1)\zeta^{(q)\prime}(s+1) \frac{F(s)}{s}\bigg)
+ O_{k, \mathfrak{p}}(q^{-1/4}\log q)
\end{aligned}
\end{equation}
o\`u
\[
F(s) := \frac{\Gamma(s+\frac{k}{2})}{\Gamma(\frac{k}{2})} \hat{q}^s G(s).
\]
Un calcul \'el\'ementaire montre que 
\begin{equation}\label{Fj}
F^{(j)}(0)
= \sum_{0\le i\le j} \xi_{j, i} (\log\hat{q})^{j-i},
\end{equation}
o\`u
\begin{align*}
\xi_{j, 0} 
& = 1
\quad(0\le j\le 3),
\\
\xi_{j, 1} 
& :=  j\frac{\Gamma'}{\Gamma}(k/2)
\quad(1\le j\le 3),
\\
\xi_{j, 2} 
& :=  (2j-3) \bigg(\frac{\Gamma''}{\Gamma}(k/2)+G''(0)\bigg)
\quad(2\le j\le 3),
\\
\xi_{3, 3} 
& :=  \frac{\Gamma'''}{\Gamma}(k/2)+3\frac{\Gamma'}{\Gamma}(k/2)G''(0),
\end{align*}

En utilisant les relations (\ref{zeta1s}) et (\ref{p1s}), on trouve
\begin{align}
\zeta^{(q)}(s+1)^{2}
& = \bigg(\frac{\varphi(q)}{q}\bigg)^2 
\bigg\{\frac{a_{-2}}{s^2} + \frac{a_{-1}}{s} + a_0 + O(s)\bigg\},
\label{zetazeta}
\\
\zeta^{(q)}(s+1)\zeta^{(q)\prime}(s+1)
& = \bigg(\frac{\varphi(q)}{q}\bigg)^2 
\bigg\{\frac{b_{-3}}{s^3} + \frac{b_{-2}}{s^2} + b_0 + O(s)\bigg\},
\label{zetazeta'}
\end{align}
o\`u
\begin{align*}
a_{-2}
& :=1,
\\
a_{-1}
& := 2\frac{\log\mathfrak{p}}{\mathfrak{p}-1}+2\gamma_0,
\\
a_0
& := \bigg(\frac{\log\mathfrak{p}}{\mathfrak{p}-1}\bigg)^2
 - \frac{(\log\mathfrak{p})^2-4\gamma_0\log\mathfrak{p}}{\mathfrak{p}-1}
+ \gamma_0^2 - 2\gamma_1,
\\
b_{-3}
& := -1,
\\
b_{-2}
& := - \frac{\log\mathfrak{p}}{\mathfrak{p}-1} - \gamma_0,
\\
b_0
& := - \frac{(\log\mathfrak{p})^3-2\gamma_0(\log\mathfrak{p})^2}{2(\mathfrak{p}-1)^2}
+ \frac{(\log\mathfrak{p})^3-6\gamma_0(\log\mathfrak{p})^2+6(\gamma_0^2-2\gamma_1)\log\mathfrak{p}}{6(\mathfrak{p}-1)}
\\
& \quad
- \gamma_0\gamma_1 + \frac{\gamma_2}{2}.
\end{align*}
Donc on a les r\'esidus suivants :
\begin{align*}
\mathop{\rm Res}_{s=0} \bigg(\zeta^{(q)}(s+1)^2 \frac{F(s)}{s}\bigg)
& = \bigg(\frac{\varphi(q)}{q}\bigg)^2
\bigg(\frac{a_{-2}}{2}F''(0) + a_{-1}F'(0) + a_0F(0)\bigg),
\\
\mathop{\rm Res}_{s=0}  \bigg(\zeta^{(q)}(s+1) \zeta^{(q)\prime}(s+1) \frac{F(s)}{s}\bigg)
& = \bigg(\frac{\varphi(q)}{q}\bigg)^2
\bigg(\frac{b_{-3}}{6}F'''(0) + \frac{b_{-2}}{2}F''(0) + b_0F(0)\bigg).
\end{align*}
En reportant dans (\ref{T0}) et en utilisant (\ref{Fj}),
on obtient
\begin{equation}\label{T0a}
{\mathscr T}_0
= \bigg(\frac{\varphi(q)}{q}\bigg)^2
Q(\log\hat{q}) + O_{k, \mathfrak{p}}(q^{-1/4}\log q),
\end{equation}
o\`u
\begin{equation}\label{109}
Q(X) := A_3 X^3 + A_2 X^2 + A_1 X + A_0,
\end{equation}
et les constantes $A_j=A_j(k, \mathfrak{p})$ sont donn\'ees par
\begin{align*}
A_3
& :=  \textstyle
a_{-2}\xi_{2, 0} + \frac{1}{3}b_{-3}\xi_{3, 0},
\\\noalign{\vskip 1mm}
A_2
& := \textstyle
a_{-2}\xi_{2, 1} + 2a_{-1}\xi_{1, 0} + \frac{1}{2}a_{-2}\xi_{2, 0}g_k(\mathfrak{p})
+ \frac{1}{3}b_{-3}\xi_{3, 1} + b_{-2}\xi_{2, 0},
\\\noalign{\vskip 1mm}
A_1
& := \textstyle
a_{-2}\xi_{2, 2} + 2a_{-1}\xi_{1, 1} + 2a_0\xi_{0, 0} 
+ (\frac{1}{2}a_{-2}\xi_{2, 1} + a_{-1}\xi_{1, 0})g_k(\mathfrak{p})
+ \frac{1}{3}b_{-3}\xi_{3, 2} + b_{-2}\xi_{2, 1},
\\\noalign{\vskip 1mm}
A_0
& := \textstyle
(\frac{1}{2}a_{-2}\xi_{2, 2} + a_{-1}\xi_{1, 1} + a_0\xi_{0, 0})g_k(\mathfrak{p})
+ \frac{1}{3}b_{-3}\xi_{3, 3} + b_{-2}\xi_{2, 2} + 2b_0\xi_{0, 0}.
\end{align*}
En combinant (\ref{T0a}), (\ref{MajR5}), (\ref{MajR6}), (\ref{termeprincipal}), (\ref{108}) avec (\ref{107}), 
on trouve
\begin{equation}\label{110}
\suma_{n\geq 1} \frac{\tau(n)}{n} T\bigg(\frac{n}{\hat{q}}\bigg) U\bigg(\frac{n}{\hat{q}^2}\bigg)
= \bigg(\frac{\varphi(q)}{q}\bigg)^3 Q(\log\hat{q})+O_{k, \mathfrak{p}}(q^{-1/4}\log q).
\end{equation}

\subsection{Estimation pour le terme d'erreur ${\mathscr R}_3$} 
\'Ecrivons
\begin{align*}
{\mathscr R}_3
& = \frac{1}{q^{3/2}}
\sum_{a\geq 1} \{\log(2a)\}^2 
\sum_{\substack{m, n\geq 1\\ (m, n)=a}} \tau(m)
\bigg|T\bigg(\frac{n}{\hat{q}}\bigg) U\bigg(\frac{m}{\hat{q}^2}\bigg)\bigg|
\\
& = \frac{1}{q^{3/2}}
\sum_{a\geq 1} \{\log(2a)\}^2 
\sum_{\substack{m, n\geq 1\\ (m, n)=1}} \tau(am)
\bigg|T\bigg(\frac{an}{\hat{q}}\bigg) U\bigg(\frac{am}{\hat{q}^2}\bigg)\bigg|
\\
& \ll \frac{1}{q^{3/2}}
\sum_{a\geq 1} \{\log(2a)\}^2 \sum_{b\ge 1} |\mu(b)|
\sum_{m, n\geq 1} \tau(abm)
\bigg|T\bigg(\frac{abn}{\hat{q}}\bigg) U\bigg(\frac{abm}{\hat{q}^2}\bigg)\bigg|
\\
& \ll \frac{1}{q^{3/2}}
\sum_{d\ge 1} h(d)\tau(d)
\sum_{n\geq 1} 
\bigg|T\bigg(\frac{dn}{\hat{q}}\bigg)\bigg|
\sum_{m\geq 1} \tau(m)
\bigg|U\bigg(\frac{dm}{\hat{q}^2}\bigg)\bigg|,
\end{align*}
o\`u
$$
h(d):=\sum_{ab=d} \{\log(2a)\}^2|\mu(b)|.
$$

En utilisant (\ref{PT}), on a :
$$
\sum_{n\geq 1} 
\bigg|T\bigg(\frac{dn}{\hat{q}}\bigg)\bigg|
\ll \sum_{n\le \hat{q}/d} 1 
+ \sum_{n>\max\{\hat{q}/d, 1\}} \bigg(\frac{\hat{q}}{dn}\bigg)^2
\ll \frac{\hat{q}}{d}.
$$
De fa{\c c}on similaire, les estimations (\ref{PU}) avec $j=2$,
(\ref{estimation2}) et (\ref{estimation4}) nous permettent de d\'eduire
\begin{align*}
\sum_{m\geq 1} \tau(m)
\bigg|U\bigg(\frac{dm}{\hat{q}^2}\bigg)\bigg|
& \ll \sum_{m\le \hat{q}^2/d} \tau(m)\log\bigg(\frac{\hat{q}^2}{dm}\bigg)
+ \sum_{m>\max\{\hat{q}^2/d, 1\}} \tau(m)\bigg(\frac{\hat{q}^2}{dm}\bigg)^2
\\
& \ll \frac{\hat{q}^2\log q}{d}.
\end{align*}
En combinant ces estimations, on obtient
\begin{equation}\label{MajR1}
{\mathscr R}_3
\ll (\log q)
\sum_{d\ge 1} \frac{h(d)\tau(d)}{d^2}
\ll \log q.
\end{equation}

\subsection{Estimation pour le terme d'erreur ${\mathscr R}_4$} 
Appliquant (\ref{PT}) avec $j=2$ et (\ref{estimation3})-(\ref{estimation4}), on a
\begin{equation}\label{104}
\begin{aligned}
\sum_{n\geq 1} \frac{\tau(n)}{\sqrt{n}} \bigg|T\bigg(\frac{n}{\hat{q}}\bigg)\bigg|
& \ll \sum_{n\leq \hat{q}} \frac{\tau(n)}{\sqrt{n}}
+ \hat{q}^2 \sum_{n>\hat{q}} \frac{\tau(n)}{n^{5/2}}
\\
& \ll q^{1/4}\log q.
\end{aligned}
\end{equation}
De m\^eme (\ref{PU}) avec $j=2$, (\ref{estimation3}) et (\ref{estimation4}) impliquent
\begin{equation}\label{105}
\begin{aligned}
\sum_{m\geq 1} \frac{\tau(m)^2}{\sqrt{m}} \bigg|U\bigg(\frac{m}{\hat{q}^2}\bigg)\bigg|
& \ll \log q 
\sum_{m\leq \hat{q}^2} \frac{\tau(m)^2}{\sqrt{m}}
+ \hat{q}^4 \sum_{m>\hat{q}^2} \frac{\tau(m)^3}{m^{5/2}}
\\
& \ll q^{1/2}(\log q)^7.
\end{aligned}
\end{equation}
En combinant (\ref{104}) et (\ref{105}), on obtient :
\begin{equation}\label{MajR2}
{\mathscr R}_4
\ll q^{-1/4}(\log q)^8.
\end{equation}

\subsection{Fin de la d\'emonstration de la Proposition~\ref{pro_16}}
En reportant (\ref{110}), (\ref{MajR2}) et (\ref{MajR1}) dans (\ref{102}),
on obtient 
$$
M_3
= 2\bigg(\frac{\varphi(q)}{q}\bigg)^4 Q(\log\hat{q}) + O_{k, \mathfrak{p}}(\log q).
$$
Un calcul \'el\'ementaire montre que
$$
A_3=\frac{2}{3},
\qquad
A_2
= 2\bigg(2\frac{\log\mathfrak{p}}{\mathfrak{p}-1}  + \frac{\Gamma'}{\Gamma}(k/2) + 2\gamma\bigg).
$$
Ceci implique le r\'esultat annonc\'e.

\section{D\'emonstration du Th\'eor\`eme~\ref{thm_1}}

On utilise l'in\'egalit\'e de H\"older, selon laquelle :
\[
\bigg(\sumh_{f\in {\rm H}^{*}_{k}(q)} L({\textstyle\frac{1}{2}}, f)^2 \bigg)^3
\leq \bigg(\sumh_{f\in{ \rm H}^{*}_{k}(q)} L({\textstyle\frac{1}{2}}, f)^3\bigg)^2
\sumh_{\substack{f\in {\rm H}^{*}_{k}(q)\\ L(\frac{1}{2}, f)\neq{0}}} 1.
\]
On en d\'eduit :
\[
\sumh_{\substack{f\in {\rm H}^{*}_{k}(q)\\ L(\frac{1}{2}, f)\neq{0}}}1
\geq \frac{M_2^3}{M_3^2}.
\]
Utilisons les Propositions \ref{pro_12}-\ref{pro_16} pour obtenir avec l'in\'egalit\'e pr\'ec\'edente :
\[
\sumh_{\substack{f\in {\rm H}^{*}_{k}(q)\\ L(\frac{1}{2}, f)\neq{0}}}1
\gg_{k,\mathfrak{p}} \frac{\left((\varphi(q)/q)^2\log{q}+O(1)\right)^3}{(\log{q})^6}
\gg_{k, \mathfrak{p}} \frac{1}{(\log{q})^3}
\]
ce qui termine la preuve du Th\'eor\`eme~\ref{thm_1}.

\end{document}